\documentclass[12pt]{article}

\usepackage{authblk}
\usepackage{hyperref}
\usepackage[T1]{fontenc}
\usepackage{amsmath}
\usepackage{amsfonts,amssymb}
\usepackage{geometry}
\usepackage{xcolor}
\usepackage{enumerate}
\usepackage{enumitem}
\usepackage{verbatim}
\usepackage[utf8]{inputenc}
\usepackage{mathtools}
\usepackage{graphicx}
\usepackage{multicol}
\usepackage{multirow}
\usepackage{pifont}
\usepackage[numbers,sort&compress]{natbib}
\usepackage{bm}
\hypersetup{
  hidelinks,
  pdftitle={Inversion Diameter of Planar Graphs},
  pdfauthor={Yichen Wang and Yuxuan Yang},
  pdfkeywords={inversion diameter, orientation, planar graph, acyclic coloring, strong degeneracy}
}

\def\0{\emptyset}

\newtheorem{theorem}{Theorem}[section]
\newtheorem{definition}[theorem]{Definition}
\newtheorem{lemma}[theorem]{Lemma}

\newenvironment{proof}[1][Proof]{\noindent\textit{#1.}\ }{\hfill $\square$\par}

\DeclareMathOperator{\tw}{\mathrm{tw}}
\DeclareMathOperator{\diam}{\operatorname{diam}}

\begin{document}


\title{Inversion Diameter of Planar Graphs}
\author[1]{\small\bfseries Yichen Wang\thanks{E-mail: wangyich22@mails.tsinghua.edu.cn}}
\author[2]{\small\bfseries Yuxuan Yang\thanks{Corresponding author. E-mail: yangyx@bupt.edu.cn}}
\affil[1]{\small Department of Mathematical Sciences, Tsinghua University, Beijing 100084, China}
\affil[2]{\small School of Science, Beijing University of Posts and Telecommunications, Beijing 100876, China}

\date{}

\maketitle

\begin{abstract}
Given an oriented graph $\vec{G}$ and a subset of vertices $X \subseteq V(\vec{G})$, the \emph{inversion} of $X$ is the operation that reverses the orientation of every arc with both endpoints in $X$. For a simple graph $G$, the inversion diameter $\diam(I(G))$ is the maximum distance between two orientations of $G$ under inversions of vertex sets. We prove the
sharp bound
\[
\diam(I(G))\le 2\chi_a(G)-2,
\]
where $\chi_a(G)$ is the acyclic chromatic number. Consequently, every planar
graph has inversion diameter at most $8$, improving the previously known
bound $12$. Using strong-degeneracy arguments, we also obtain upper bounds
$7$, $5$, and $4$ for planar graphs of girth at least $4$, $5$, and $6$,
respectively.
\end{abstract}

\noindent\textbf{Keywords.} inversion diameter; orientation; planar graph;
acyclic coloring; strong degeneracy.

\noindent\textbf{2020 Mathematics Subject Classification.} 05C20, 05C10,
05C15.
\vskip.3cm

\section{Introduction}

Let $\vec G$ be an orientation of a graph $G$. For a set
$X\subseteq V(G)$, the \emph{inversion} of $X$ reverses every arc of
$\vec G[X]$. This operation was introduced by Belkhechine et
al.~\cite{BBBP10} in connection with the inversion number, which asks for the
minimum number of inversions that makes a given orientation acyclic. The
operation has since been studied in several structural and algorithmic
settings; see, for example, \cite{APS+24,AHH+22,BdSH21,DHHR23}.

The \emph{inversion graph} $I(G)$ has as its vertices the labeled
orientations of $G$, with two orientations adjacent when one is obtained from
the other by a single inversion. Its diameter $\diam(I(G))$ is therefore the
largest inversion distance between two orientations of $G$. Equivalently,
after recording the edges on which two orientations disagree by a labeling
$\pi:E(G)\to\mathbb F_2$, it is the least integer $t$ such that every such
labeling has a representation
\[
\pi(uv)=\bm f(u)\cdot\bm f(v)\qquad(uv\in E(G))
\]
by vectors in $\mathbb F_2^t$. This linear-algebraic formulation is used throughout the paper.

Havet et al.~\cite{HHR24} related bounded inversion diameter to bounded star,
acyclic, and oriented chromatic numbers. They also proved that every planar
graph has inversion diameter at most $12$, that every planar graph of girth
at least $8$ has inversion diameter at most $3$, and that the latter value is
best possible even for planar graphs of arbitrarily large even girth. Arana
et al.~\cite{ABBC+} subsequently obtained the lower bound $6$ for unrestricted
planar graphs, improved several bounds under girth assumptions, and determined
the value $3$ for girth at least $7$. Before the present work, the best bounds
for unrestricted planar graphs were therefore $6$ and $12$.

Our first result gives a sharp linear relation with the acyclic chromatic
number. Havet et al.~\cite{HHR24} proved the quadratic estimate
$\diam(I(G))\le(2k^2+2k+2)/3$ when $\chi_a(G)=k$, and Arana et
al.~\cite{ABBC+} improved this to $\max\{4k-7,2\}$. We prove the following.

\begin{theorem}\label{thm:main}
If a graph $G$ has acyclic chromatic number $k$, then
\begin{equation}\label{eq:main-bound}
\diam(I(G)) \le 2k-2.
\end{equation}
\end{theorem}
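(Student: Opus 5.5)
The plan is to work in the linear-algebraic formulation from the introduction. Fix an acyclic proper colouring $V(G)=V_1\cup\dots\cup V_k$, so that each $G[V_i\cup V_j]$ is a forest, and fix an arbitrary labeling $\pi:E(G)\to\mathbb F_2$. The goal is vectors $\bm f(v)\in\mathbb F_2^{2k-2}$ with $\bm f(u)\cdot\bm f(v)=\pi(uv)$ on every edge.

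The basic tool is the case $k=2$. On a forest every edge labeling is a coboundary: choose $s(v)$ freely at a root of each tree and propagate along edges, so that $\pi(uv)=s(u)+s(v)$. Then for $u\in V_1$ and $v\in V_2$ we have
\[
s(u)+s(v)=(s(u),1)\cdot(1,s(v)),
\]
which gives the bound $2=2k-2$. The case $k=1$ is trivial, since there are no edges. This suggests that each new colour class should cost exactly two coordinates. I would therefore argue by induction on $k$, removing the last class $V_k$.

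For the inductive step I would choose the order of operations carefully. Give every vertex outside $V_k$ two new coordinates $(p(u),q(u))$. This changes the old dot products by $p(u)p(u')+q(u)q(u')$, so I would first fix $p,q$ and then apply induction to the corrected labeling
\[
\pi'(uu')=\pi(uu')+p(u)p(u')+q(u)q(u')
\]
on $G-V_k$, which uses $2k-4$ coordinates. This is the right place to use the freedom in the hypothesis, because $\pi'$ is arbitrary anyway. Afterwards each $v\in V_k$ receives a vector $(\bm w(v),a_v,b_v)$, and for every neighbour $u$ of $v$ we need
\[
\bm w(v)\cdot\bm f'(u)+a_v\,p(u)+b_v\,q(u)=\pi(uv),
\]
where $\bm f'(u)$ is the vector of $u$ given by the induction.

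The first attempt is $p\equiv 1$, $\bm w(v)=0$, and $q=s$, a single switching function on the vertices outside $V_k$. The required condition is then $\pi(uv)=a_v+b_v\,s(u)$ for $u\in N(v)$. This cannot hold for general $\pi$, because the bipartite graph between $V_k$ and the rest is not a forest. Only its restrictions to each $V_i\cup V_k$ are forests. This is the main obstacle: combining the $k-1$ forests $G[V_i\cup V_k]$ with only two new coordinates.

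I would strengthen the induction hypothesis so that the old vectors $\bm f'$ can also be exploited through $\bm w(v)$. The first candidate is to require that the vectors on each class $V_i$ take prescribed values on designated coordinates, say the coordinate pair introduced when $V_i$ was added. Then, in the forest $G[V_i\cup V_k]$ rooted inside $V_k$, a vertex $v\in V_k$ can correct its edges to $V_i$ by its entries $\bm w(v)$ on that pair, via the forest identity above. This would reduce the two new coordinates $(a_v,b_v)$ to absorbing only a bounded, class-independent error. Making such a normal form survive the passage $\pi\mapsto\pi'$ is the step I expect to require the real work.
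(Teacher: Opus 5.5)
Your base case and the passage to a corrected labeling $\pi'$ on $G-V_k$ are fine, and you locate the obstacle correctly. However, the proposal stops exactly where the theorem lives. The strengthened induction hypothesis is never stated, the vectors $(\bm w(v),a_v,b_v)$ for $v\in V_k$ are never produced, and you concede that you do not know how to make the normal form survive $\pi\mapsto\pi'$. As written, the argument proves the bound only for $k\le 2$.

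There is also a counting problem hidden in the plan. In your base case $V_1$ and $V_2$ share a single coordinate pair, so the normal form ``each class owns the pair introduced with it'' is not available there. You give no mechanism by which a later vertex $v\in V_k$ could correct its edges to $V_1$ and to $V_2$ independently through one shared pair. A normal form with one pair per class leads naturally to $2k$, not $2k-2$.

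The missing idea is isotropy. The paper takes $\bm q_r=\bm e_{2r-1}+\bm e_{2r}$, which span a totally isotropic subspace, and $\bm p_i=\bm e_{2i}$, so that $\bm p_i\cdot\bm p_j=0$ and $\bm p_i\cdot\bm q_r=\delta_{ir}$. For each pair $\{i,j\}$ it solves $x_j(u)+x_i(v)=\pi(uv)$ ($u\in V_i$, $v\in V_j$) on the forest $G[V_i\cup V_j]$. It then sets $\bm f(v)=\bm p_{c(v)}+\sum_j x_j(v)\bm q_j$, with $x_{c(v)}(v)=0$. All cross terms vanish, so $\bm f(u)\cdot\bm f(v)=x_j(u)+x_i(v)$, and all $\binom k2$ forests are handled at once, without induction.

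In your language this is exactly the normal form you were looking for: a vertex of $V_i$ has entries $(0,1)$ on pair $i$ and entries in $\{(0,0),(1,1)\}$ on every other pair. Combine it with the choice $p=q=x_k$ on old vertices and $(a_v,b_v)=(0,1)$. Then $p(u)p(u')+q(u)q(u')=0$, so $\pi'=\pi$ and your preservation problem disappears.

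This costs $2k$ coordinates. The second missing ingredient is the saving of one pair: drop pair $k$, replace $\bm p_k$ by $\bm p_k^*=\sum_{i<k}\bm p_i$, and give each vertex $v$ of colour $i<k$ the coefficient $\alpha_i(v)=1+\sum_{j}x_j(v)$ on $\bm q_i$.
\begin{itemize}
\item This change is invisible to neighbours of colours $j<k$, $j\ne i$, since $\bm q_i$ is orthogonal to $\bm p_j$ and to the whole isotropic span of the $\bm q_r$.
\item For a neighbour $u\in V_k$, the vector $\bm p_k^*$ reads the sum of all $\bm q$-coefficients of $v$. The choice of $\alpha_i(v)$ turns this sum into $1+x_k(v)$, and the extra $1$ cancels $\bm p_k^*\cdot\bm p_i=1$.
\end{itemize}
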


The bound is exact for every $k\ge2$: there is a graph $G$ with
$\chi_a(G)=k$ and
$\diam(I(G))=2k-2$. This follows by combining Theorem~\ref{thm:main} with
the sharp treewidth examples constructed in~\cite{WWYL25}. Borodin's acyclic
$5$-color theorem for planar graphs now gives our first planar consequence.

\begin{theorem}\label{thm:planar8}
Every planar graph $G$ satisfies $\diam(I(G)) \le 8$.
\end{theorem}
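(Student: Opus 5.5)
The plan is to derive Theorem~\ref{thm:planar8} directly from Theorem~\ref{thm:main} together with Borodin's acyclic $5$-color theorem, which states that every planar graph $G$ satisfies $\chi_a(G)\le 5$.

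The first step is to check that the bound of Theorem~\ref{thm:main} is monotone in $k$. The statement is phrased for graphs with acyclic chromatic number exactly $k$. If $\chi_a(G)=k'\le 5$, then applying Theorem~\ref{thm:main} with $k'$ gives
\[
\diam(I(G))\le 2k'-2\le 2\cdot 5-2=8 .
\]
So the exact-versus-at-most issue causes no difficulty.

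The trivial cases also need to be handled. If $G$ has no edges, then $I(G)$ is a single vertex and its diameter is $0$. If $\chi_a(G)=1$, then $G$ is edgeless, so this falls under the same case. Otherwise $2\le\chi_a(G)\le5$, and the displayed inequality gives the bound $8$.

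There is no substantial obstacle: all the real work lies in Theorem~\ref{thm:main}, which we may assume. The only point needing care is the citation of Borodin's theorem as an external result.
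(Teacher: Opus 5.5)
Your proposal is correct and matches the paper's proof, which combines Borodin's acyclic $5$-color theorem with Theorem~\ref{thm:main} at $k=5$. Your step of applying Theorem~\ref{thm:main} with $k'=\chi_a(G)\le 5$ and then using monotonicity of $2k-2$ handles the ``exactly $k$'' phrasing more carefully; the paper instead relies implicitly on the fact that its proof of Theorem~\ref{thm:main} only uses some acyclic $k$-coloring.
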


Stronger estimates are possible when the girth is prescribed. For these
results we use directly the strong degeneracy introduced by Havet et
al.~\cite{HHR24}: a strongly $t$-degenerate graph has inversion diameter at
most $t$. We encode a recursive strong ordering by an auxiliary plane graph
whose remaining original vertices are called variable vertices and whose
removed vertices are represented by constrain vertices. The degree of a
constrain vertex measures the number of subsets that must be avoided in
Havet's ordering condition.

For every $g\ge5$, Euler's formula yields a uniform charge computation and an
explicit strong-degeneracy bound $T(g)$. It gives
$T(5)=5$, $T(6)=T(7)=4$, and $T(g)=3$ for $g\ge8$. The case $g=4$ is a
genuine boundary case: the Euler coefficient of a binary constrain vertex is
zero although that vertex still contributes a forbidden subset. We overcome
this obstruction by suppressing the binary constrain vertices, combining the
resulting planar estimate with the triangle-free Euler estimate, and deriving
a separate reducible-vertex lemma. This proves the following three new
bounds.

\begin{theorem}\label{thm:girth4-intro}
Every finite simple planar graph $G$ of girth at least $4$ satisfies
$\diam(I(G))\le 7$.
\end{theorem}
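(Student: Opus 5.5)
We prove Theorem~\ref{thm:girth4-intro} through strong degeneracy, using the result of Havet et al.~\cite{HHR24} that a strongly $t$-degenerate graph has inversion diameter at most $t$. So it suffices to show that every triangle-free planar graph is strongly $7$-degenerate. We take the recursive form of Havet's condition: we build an ordering by repeatedly deleting a vertex $v$ whose already-deleted neighbours, together with its remaining neighbours, produce few forbidden subsets, specifically at most $7$ once the constraints inherited from removed vertices are counted. We track this state with the auxiliary plane graph $H$ described in the introduction. Its \emph{variable vertices} are the vertices of $G$ not yet removed. Each removed vertex is replaced by a \emph{constrain vertex} whose neighbours are the variable vertices that still carry a forbidden-subset condition from it. The degree of a constrain vertex is the number of subsets it forbids. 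Contracting a removed vertex into a constrain vertex preserves planarity, and because $G$ has no triangles, two variable vertices are never joined both directly and through a binary constrain vertex in a way that creates a triangle of variable vertices. Hence $H$ is a plane graph whose variable part is triangle-free.

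The core step is a discharging argument showing that $H$ always contains a \emph{reducible} variable vertex. Such a vertex $v$ has (number of variable neighbours) plus (number of incident constrain vertices, weighted by their degree contributions) at most $7$, so it can be removed next, after which we update $H$. We assign charge $\deg(x)-4$ to every vertex and $\deg(f)-4$ to every face. By Euler's formula the total charge is $-8$. In a face containing a constrain vertex of degree $2$, i.e.\ a \emph{binary} constrain vertex, the Euler coefficient is $0$ even though that vertex still forbids one subset. This is the obstruction the introduction identifies.

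We handle the binary constrain vertices by suppressing them. Each binary constrain vertex with neighbours $u,w$ is replaced by an edge $uw$ marked as \emph{binary*}, producing a plane multigraph $H'$ on the variable vertices and the constrain vertices of degree at least $3$. The graph $H'$ may now contain triangles, but each triangle uses at least one marked edge, so the general planar estimate $|E|\le 3|V|-6$ applies to $H'$. The unsuppressed subgraph, consisting of the original edges of $G$ together with the constrain vertices of degree at least $3$, is triangle-free, so it satisfies $|E|\le 2|V|-4$. Adding a suitable convex combination of these two inequalities yields an average bound. From this bound we derive a reducible-vertex lemma: if no variable vertex has total weight at most $7$, the two edge counts are incompatible. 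Each variable vertex then has weight at least $8$, counting original neighbours, marked edges, and incidences with higher constrain vertices (each contributing one subset per incidence). Each constrain vertex of degree $d\ge 3$ forbids $d-1$ or fewer subsets per neighbour. Combining these weights with the two inequalities produces a contradiction.

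The main obstacle is this final calculation. We must choose the weights so that marked edges are charged enough in the planar bound while original edges benefit from the triangle-free bound, and the resulting coefficient must come out strictly below $8$. If the plain convex combination is insufficient, we first try a finer structural case: either a variable vertex of degree at most $3$ in $H'$ incident to few marked edges exists and is reducible, or we discharge from faces of length at least $4$ in the triangle-free part to the marked triangles. Once the lemma holds, induction on the number of variable vertices gives strong $7$-degeneracy, and the theorem follows.
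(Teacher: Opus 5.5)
Your architecture matches the paper's: strong degeneracy via Havet et al., the auxiliary plane constraint system, suppression of binary constrain vertices, and a convex combination of the triangle-free bound $2N-4$ with the simple-planar bound $3N-6$. But there is a genuine gap exactly where the work lies.

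First, your reducibility criterion is wrong. A constrain vertex of degree $r$ at $v$ records an earlier neighbourhood of size $r-1$. It therefore contributes $2^{r-1}-1$ nonempty subsets to $\mathcal S_\prec(v)$, not $r$ or $r-1$. Definition~\ref{def:strong-degeneracy} requires $q_P(v)+\log(1+W_P(v))<7$, i.e.\ $W_P(v)<2^{7-q_P(v)}-1$ with $W_P(v)=\sum_a(2^{d_P(a)-1}-1)$. An additive ``weight at most $7$'' does not imply this. Take $q_P(v)=4$ with one incident constrain vertex of degree $4$: its weight is $5$ or $7$ under your two counts, yet $W_P(v)=7=2^3-1$, so $v$ cannot be placed. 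Because the cost is exponential in $r$, you also need a bound on constrain-vertex degrees, which you never impose. The paper reduces only at vertices with $q_P(v)\le 4$, so every constrain vertex has degree at most $5$. That bound is what lets each incidence's Euler coefficient ($\frac1{26}$ for $r=2$, $1-\frac{27}{13r}$ for $r=3,4,5$) dominate $\frac1{26}$ of its contribution $1,3,7,15$ to $W_P(v)$.

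Second, the decisive calculation is missing: you name it as the main obstacle and list fallbacks. In the paper, you take $\frac{12}{13}$ of $e_D+e_I\le 2N-4$ and $\frac1{13}$ of $e_D+m_2+e_I\le 3N-6$. This distributes as $\gamma(v)=\frac{q_P(v)}2+\frac{h_2(v)}{26}+\sum_{r=3}^5(1-\frac{27}{13r})h_r(v)-\frac{27}{13}$ with $\sum_v\gamma(v)\le-\frac{54}{13}$. A non-reducible vertex then has $\gamma(v)\ge 0$, because $\frac q2+\frac{2^{7-q}-1}{26}\ge\frac{27}{13}$ for $q=0,\dots,4$ (equality at $q=3$) and $\frac q2>\frac{27}{13}$ for $q\ge 5$.

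Third, $|E|\le 3|V|-6$ needs the suppressed graph to be simple, whereas you call $H'$ a multigraph and only assert that the variable part is triangle-free. Parallel edges genuinely occur: removing two opposite vertices of a $4$-cycle leaves two binary constrain vertices on the same pair. The paper prevents this by keeping the system canonical, i.e.\ merging constrain vertices with equal neighbourhoods. Merging is harmless, since equal recorded sets do not enlarge $\mathcal S_\prec$, and it is also needed so that $W_P$ does not double count. A binary constrain vertex parallel to a direct edge would be a triangle. The paper excludes this because the whole auxiliary graph, constrain vertices included, stays triangle-free when each new constrain vertex is drawn at the removed vertex's position.
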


\begin{theorem}\label{thm:girth5-intro}
Every finite simple planar graph $G$ of girth at least $5$ satisfies
$\diam(I(G))\le 5$.
\end{theorem}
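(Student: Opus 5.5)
The plan is to derive Theorem~\ref{thm:girth5-intro} from Havet et al.'s strong-degeneracy criterion: a strongly $t$-degenerate graph has inversion diameter at most $t$. It therefore suffices to show that every planar graph of girth at least $5$ is strongly $5$-degenerate, which is the case $g=5$ of the uniform Euler-formula bound $T(g)$ described in the introduction, with $T(5)=5$.

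\textbf{Step 1: the recursive strong ordering.} We process the graph by repeatedly deleting a vertex $v$ whose neighbours in the remaining graph impose at most $t$ forbidden subsets in Havet's ordering condition. This state is encoded by an auxiliary plane graph $H$ with two kinds of vertices.
\begin{itemize}
\item \emph{Variable vertices} are the vertices of $G$ not yet removed.
\item \emph{Constrain vertices} stand for removed vertices, or for the constraints they leave behind. Each is adjacent to the variable vertices whose subsets it forbids, so its degree counts the forbidden subsets it contributes.
\end{itemize}
A variable vertex is reducible when the number of its variable neighbours plus the number of constraints touching it is at most $5$. We must check three things: removing such a vertex keeps $H$ plane; the new constrain vertex can be drawn inside the face structure left by the deletion; and the girth condition is inherited, in the sense that the faces of $H$ still have length at least $5$, possibly counting constrain vertices with suitable weights.

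\textbf{Step 2: discharging.} We show that every such configuration contains a reducible variable vertex. Assign charge $\deg(x)-\tfrac{2g}{g-2}$ to vertices and the corresponding face charges, and use Euler's formula so that the total charge is negative. For $g=5$ the vertex threshold is $10/3$, so a variable vertex of degree at most $3$ has negative charge. Constrain vertices of degree at least $3$ carry nonnegative Euler coefficient. Hence a minimal counterexample would have every variable vertex with at least $6$ constraint-incidences, giving positive total charge, a contradiction. The key inequality compares the contribution of each constrain vertex to the forbidden-subset count with its Euler weight.

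\textbf{Main obstacle.} The hard part is calibrating constrain vertices of small degree, since a degree-$2$ constrain vertex forbids a subset but has small Euler weight. For $g=5$ the coefficient $\deg-10/3$ is still negative at degree $2$, which is favourable, as opposed to the zero coefficient that causes trouble at $g=4$. I would first try to absorb the charge of each degree-$2$ constrain vertex into its two variable endpoints, giving each $1/2$ extra demand. I would then verify that a vertex with $6$ incidences still has enough charge. If this fails, I would suppress the binary constrain vertices by replacing them with edges, accepting a loss in girth, and redo the count with mixed face lengths.
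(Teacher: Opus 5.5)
Your architecture matches the paper's: Havet's criterion, a removal order recorded by variable and constrain vertices, and an Euler charge with $\alpha_5=5/3$. But the quantitative core is wrong, so the proof does not go through.

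The first gap is your reducibility test. You have dropped the logarithm in Havet's condition and counted forbidden subsets linearly. A constrain vertex of degree $r$ at $v$ records a later neighbour $w$ with $|N_{\prec v}(w)|=r-1$. It therefore contributes $2^{r-1}-1$ nonempty sets to $\mathcal S_\prec(v)$, not $r$ or $1$. Havet's condition needs $q+\log|\mathcal S_\prec(v)|<5$. The paper secures this via $q<5$ and $1+W_P(v)<2^{5-q}$, where $W_P(v)=\sum_a(2^{d_P(a)-1}-1)$. It is not $q+h\le 5$, and your test does not imply a $5$-strong ordering. Let $u$ have neighbours $w_1,\dots,w_5$, each with four private leaves; this is a tree, so it is planar of girth at least $5$. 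Your rule permits removing $w_1,\dots,w_5$ first (each has $q=5$, $h=0$), then $u$ (now $q=0$, $h=5$), then the leaves. In the resulting order $|\mathcal S_\prec(u)|=1+5\cdot 15=76>2^5$. The example also exposes an invariant you never state. The paper only reduces at vertices with $q\le D_5=3$, so every constrain vertex has degree at most $3$. This cap is what keeps the conversion ratio at $\rho_5=27/4$; admitting degree-$4$ constrain vertices would raise it to $12$, and the value $5$ would no longer come out of the computation.

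The second gap is the discharging, whose stated claims are false. With charge $\deg(x)-10/3$, a ternary constrain vertex has charge $-1/3$ and a binary one $-4/3$, and negative charge on constrain vertices works against you. At $g=4$ the binary value in your normalization is $-2$, not $0$; you are conflating it with the per-incidence weight $1-\alpha_g/r$. Your own check fails. A variable vertex with $q=0$ and six binary constrain neighbours ends at $\tfrac83-6\cdot\tfrac23=-\tfrac43$, and even with your $\tfrac12$ per endpoint it ends at $-\tfrac13$.

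The missing idea is that the exponential threshold, not a linear count, is what neutralizes small constrain vertices. Take the paper's charge $L(v)=q/2+\sum_a\bigl(1-\alpha_5/d_P(a)\bigr)$ on a component of minimum degree at least $2$. Then:
\begin{itemize}
\item $\sum_{v\in X}L(v)<\alpha_5|X|$;
\item every incidence with a constrain vertex of degree $2$ or $3$ has positive weight $1/6$ or $4/9$;
\item a vertex with $L(v)<5/3$ satisfies $q\le 3$ and $W_P(v)<\tfrac{27}{4}\bigl(\tfrac53-\tfrac q2\bigr)$.
\end{itemize}
So for $q=0,1,2,3$ the quantity $1+W_P(v)$ is at most $12,8,5,2$, below $2^{5-q}=32,16,8,4$.

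No suppression of binary constrain vertices is needed at girth $5$; that device is required only at girth $4$, where $1-\alpha_4/2=0$. Your Step 1 worries are harmless: placing the new constrain vertex at $v$'s former position preserves planarity and girth exactly, with no weights.
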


\begin{theorem}\label{thm:girth6-intro}
Every finite simple planar graph $G$ of girth at least $6$ satisfies
$\diam(I(G))\le 4$.
\end{theorem}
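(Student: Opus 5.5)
The plan is to show that planar graphs of girth at least $6$ are strongly $4$-degenerate, and then invoke the Havet et al.\ result quoted in the introduction: a strongly $t$-degenerate graph has inversion diameter at most $t$. In the encoding described above, this amounts to the charge computation for $g\ge 6$ giving $T(6)=4$. We set up the auxiliary plane graph $H$ as follows. Variable vertices are the not-yet-removed original vertices. Each removed vertex $w$ is replaced by a constrain vertex adjacent to the variable vertices whose labels it constrains. A constrain vertex of degree $d$ forbids one subset of size $d$ among the coordinates of a neighbouring variable vertex.

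In strong-degeneracy terms, a variable vertex $v$ is reducible when
\[
\#\{\text{variable neighbours of } v\}+\#\{\text{constrain neighbours}\}\le 4,
\]
or, more precisely, when the number of forbidden subsets in Havet's ordering condition is below the threshold for $t=4$. In that case we remove $v$ and replace it by a new constrain vertex. This preserves planarity, since a vertex is contracted into a star. It also preserves the relevant girth condition on $H$: cycles through constrain vertices come from cycles of $G$, so faces of $H$ still have length at least $6$, possibly after we suppress degree-$\le1$ constrain vertices, which impose no real restriction.

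The core is a discharging argument on $H$. Give each vertex $x$ the charge $2\deg(x)-6$ and each face $f$ the charge $\deg(f)-6$. By Euler's formula the total is $-12<0$, and since girth at least $6$ makes every face charge nonnegative, some vertex carries negative charge. Constrain vertices of degree $1$ or $2$ are the problematic ones. We first argue that degree-$1$ constrain vertices can be deleted. We then redistribute so that each constrain vertex of degree $d\ge 2$ ends with charge at least $0$, while each variable vertex is charged once per constrain neighbour, weighted by how many forbidden subsets that neighbour contributes. The goal is the inequality
\[
\sum_{x\ \text{variable}} \bigl(2\deg(x)-6-\text{(cost of constraints at }x)\bigr)<0,
\]
which forces some variable vertex to have few variable neighbours plus weighted constraints, that is, to be reducible for $t=4$.

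The main obstacle is calibrating the weights so that the contribution of a binary constrain vertex is fully paid for in the girth-$6$ setting. Its Euler coefficient $2\cdot2-6=-2$ is negative, so it must draw charge from its two neighbours or from faces. First, I would let each face of length at least $7$ send its surplus to incident binary constrain vertices. If that is insufficient, I would instead suppress binary constrain vertices into edges. This gives a planar graph of girth at least $3$ in which each suppressed edge carries one forbidden subset, and I would then check directly, using $|E|\le 3|V|-6$ in place of the girth bound, that some variable vertex has at most $4$ units of total obstruction. Finally, induction on $|V(G)|$ through the recursive ordering gives strong $4$-degeneracy, and hence $\diam(I(G))\le 4$.
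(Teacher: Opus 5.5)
You use the same framework as the paper: an auxiliary plane graph with constrain vertices, reduction at a variable vertex, an Euler count, then Theorem~\ref{thm:strong-diameter}. However, the quantitative step that actually gives $T(6)=4$ is never carried out, and what you state in its place is not correct. A constrain vertex of degree $d$ records a $(d-1)$-set and contributes $2^{d-1}-1$ nonempty subsets to $\mathcal S_\prec(v)$, not ``one subset of size $d$''. So for a variable vertex with $q$ direct neighbours and $h$ binary constrain neighbours, reducibility at $t=4$ means $q<4$ and $h<2^{4-q}-1$. Your criterion $q+h\le 4$ admits $q=3,h=1$, which violates Havet's condition. More seriously, nothing in your plan bounds the degrees of constrain vertices. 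Reducing at a vertex with $q=3$ creates a ternary constrain vertex. In your discharging it has charge $2\cdot 3-6=0$ and needs nothing, yet it forbids $3$ subsets at each neighbour, so no charge inequality of your type controls $W_P(v)$ any more. The paper avoids this with the invariant $D_6=2$: it only reduces at a vertex with $q\le 2$, so every constrain vertex stays binary and contributes exactly one forbidden subset.

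With that invariant in place, the step you call the main obstacle is not an obstacle, and your fallbacks would not help. With your charges, let each binary constrain vertex take $1$ from each of its two variable neighbours. It ends at $0$ and faces are nonnegative, so some variable vertex has $2q+h-6<0$, i.e.\ $2q+h\le 5$. This gives $q\le 2$, so the invariant persists, and $h\le 5,3,1$ for $q=0,1,2$, within the required $h<15,7,3$. This is exactly the paper's inequality $L(v)<\alpha_6=3/2$ with $\rho_6=4$, i.e.\ $1+W_P(v)\le C_6(q)=6-2q$; you never state or check it. Face surplus gives nothing when all faces are hexagons. Suppressing binary constrain vertices and using only $|E|\le 3|V|-6$ discards the girth information on direct edges. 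Average degree below $6$ only yields a variable vertex with $q+h\le 5$ (not $\le 4$), which allows $q=2,h=3$ or $q=3,h=2$, and neither is reducible for $t=4$. The paper uses suppression only at girth $4$, and even there only in combination with the triangle-free Euler inequality.
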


These results are summarized in Table~\ref{tab:girth-bounds}.
\begin{table}[htbp]
\centering
\begin{tabular}{|c|c|c|c|}
\hline
$g$ & lower bound & previous upper bound & upper bound proved here\\
\hline
3 & 6~\cite{ABBC+} & 12~\cite{HHR24} & 8 \\
\hline
4 & 4~\cite{ABBC+} & 10~\cite{ABBC+} & 7 \\
\hline
5 & 3~\cite{HHR24} & 7~\cite{ABBC+} & 5 \\
\hline
6 & 3~\cite{HHR24} & 5~\cite{ABBC+} & 4\\
\hline
7 & 3~\cite{HHR24} & 3~\cite{ABBC+} & -- \\
\hline
8 & 3~\cite{HHR24} & 3~\cite{HHR24} & -- \\
\hline
\end{tabular}
\caption{Bounds on the inversion diameter of planar graphs of minimum girth
$g$.}
\label{tab:girth-bounds}
\end{table}

The paper is organized as follows. Section~2 collects the vector
characterization, the required coloring results, and strong degeneracy.
Section~3 proves Theorem~\ref{thm:main}, its sharpness, and
Theorem~\ref{thm:planar8}. Section~4 develops the plane constraint systems,
gives the uniform computation for $g\ge5$, and concludes with the exceptional
girth-$4$ argument.

\paragraph{Notation.} We use standard graph-theoretic notation; see
\cite{BM08} for undefined terms. All graphs are finite, simple, and
undirected unless stated otherwise. We write $\mathbb{F}_2$ for the $2$-element
field and $\cdot$ for the standard dot product on $\mathbb{F}_2^t$. For a positive
integer $k$, we let $[k]=\{1,\dots,k\}$.

\section{Preliminaries}

We recall the recursive definition of $k$-trees and the resulting subgraph
characterization of treewidth.

A \emph{$k$-tree} is a graph obtained from the complete graph $K_{k+1}$ by
repeatedly inserting new vertices linked to an existing clique of size $k$.
A graph is then said to have \emph{treewidth at most $k$} if it is a subgraph
of some $k$-tree. It is well known that this definition coincides with the usual
definition via tree-decompositions~\cite{Bodlaender98}.

The following folklore result appears, for instance, in the work of
Sopena~\cite{Sopena01}; we include a proof for completeness.

\begin{theorem}\label{tw_acyclic}
Every graph with treewidth at most $k$ has acyclic chromatic number at most $k+1$.
\end{theorem}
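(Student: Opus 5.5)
Since acyclic chromatic number is monotone under taking subgraphs (a proper colouring with no bichromatic cycle restricts to one), and by the definition above every graph of treewidth at most $k$ is a subgraph of a $k$-tree, it suffices to show that every $k$-tree $H$ has an acyclic colouring with $k+1$ colours. We argue by induction along the recursive construction of $H$. We keep two invariants: the colouring is proper, and every $(k+1)$-clique of $H$ receives $k+1$ distinct colours, while every $k$-clique receives $k$ distinct colours.

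For the base case $H=K_{k+1}$, we give the vertices all $k+1$ colours. For the inductive step, suppose $H$ is obtained from $H'$ by adding a vertex $v$ adjacent to a $k$-clique $Q$ of $H'$. By properness of the colouring of $H'$, the clique $Q$ uses exactly $k$ distinct colours. We give $v$ the unique remaining colour. Then $v$ differs from all its neighbours, and $Q\cup\{v\}$ is a rainbow $(k+1)$-clique, so both invariants hold. In fact, $H$ then has a proper colouring in which the neighbourhood of each newly added vertex is rainbow.

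The main point is acyclicity. Suppose $C$ is a cycle of $H$ using only two colours $a$ and $b$, and let $v$ be the vertex of $C$ added last in the construction. Its two neighbours on $C$ were present before $v$ was added, so they lie in the clique $Q$ to which $v$ was attached. Both neighbours have the same colour, since $C$ alternates between $a$ and $b$. This contradicts the fact that $Q$ is rainbow. Hence $H$ has no bichromatic cycle.

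The only subtle step is the choice of the last-added vertex of the cycle. This works because, in a $k$-tree, every vertex has all of its earlier neighbours inside the single clique it was attached to. Restricting the colouring of $H$ to $G\subseteq H$ then gives $\chi_a(G)\le k+1$.
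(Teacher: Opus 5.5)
Your proof is correct and follows the paper's argument essentially step for step: you reduce to $k$-trees, colour each inserted vertex with the unique colour missing from its attaching $k$-clique, and rule out a bichromatic cycle by taking its last-inserted vertex, whose two cycle-neighbours would be equally coloured vertices of a rainbow clique. Like the paper, you leave implicit the case of a cycle lying entirely in the initial $K_{k+1}$, which is immediate because that clique is rainbow and so any cycle in it uses at least three colours.
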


\begin{proof}
It suffices to prove the statement for $k$-trees, since the acyclic chromatic
number is monotone under taking subgraphs. Starting with a proper
$(k+1)$-coloring of the initial $(k+1)$-clique, every newly inserted vertex $v$
is attached to an existing $k$-clique $K$. By induction, the $k$ vertices of
$K$ have received $k$ distinct colors, so $v$ can be colored with the unique
remaining color from $\{1,\dots,k+1\}$. This yields a proper coloring.
To see that the coloring is acyclic, argue inductively along the construction
of the $k$-tree. If a bichromatic cycle first appears when $v$ is inserted,
then the two neighbors of $v$ on this cycle lie in $K$ and have the same
color. This is impossible because the vertices of the clique $K$ have
pairwise distinct colors.
\end{proof}

\begin{theorem}[Wang et al.~\cite{WWYL25}]\label{thm:wwyl}
For every integer $t\ge 1$, there exists a graph $G$ such that
\[
\tw(G)=t
\qquad\text{and}\qquad
\diam(I(G))=2t.
\]
\end{theorem}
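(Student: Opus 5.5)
The statement has two halves. The upper bound $\diam(I(G))\le 2t$ should hold for every graph of treewidth $t$, and would come from Theorem~\ref{tw_acyclic} together with Theorem~\ref{thm:main}. The lower bound needs, for each $t$, a specific graph of treewidth exactly $t$ carrying a labeling $\pi:E(G)\to\mathbb F_2$ with no representation in $\mathbb F_2^{2t-1}$. I expect the lower bound to be the real work.

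\emph{Upper bound.} By Theorem~\ref{tw_acyclic}, $\tw(G)\le t$ gives $\chi_a(G)\le t+1$. Theorem~\ref{thm:main} with $k=t+1$ then gives $\diam(I(G))\le 2t$. Theorem~\ref{thm:main} is only proved in Section~3; if it must not be used here, I would instead induct along a $t$-tree construction.

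\emph{Lower bound: the model case $t=1$.} Take the path $a\,b\,c\,d$ with labels $1,0,1$ and suppose it has a representation in $\mathbb F_2^1$. The labels $\pi(ab)=\pi(cd)=1$ force $f(b)=f(c)=1$, so $\pi(bc)=f(b)f(c)=1\neq 0$, a contradiction. Hence $\diam(I(P_4))\ge 2$.

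\emph{Lower bound: general $t$.} I would build $G$ as a $t$-tree assembled from many gadgets, arranged so that the representing vectors are forced into a rigid configuration.
\begin{enumerate}
\item Start from a base clique $K=\{u_1,\dots,u_t\}$ and attach many new vertices to $t$-cliques, which keeps the treewidth at $t$. Adding one $K_{t+1}$ shows the treewidth is exactly $t$.
\item Only finitely many assignments $f|_K$ into $\mathbb F_2^{2t-1}$ exist. So I would attach, for every possible pattern of labels on the edges from a new layer to $K$, a separate family of gadgets.
\item Given any representation $f$, a pigeonhole argument should then find a gadget whose labels are incompatible with the particular $f|_K$.
\item Inside a gadget, the labels $\pi$ prescribe most entries of the Gram matrix $FF^{\mathsf T}$ of the vectors on some $2t$ vertices. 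The diagonal entries and the entries on non-edges remain free. I would choose the prescribed entries so that every completion of the free entries gives a nonsingular matrix, for instance a block form $\begin{pmatrix} * & I\\ I & *\end{pmatrix}$ realized as $K_{t,t}$ plus a clique on one side. Nonsingularity forces $\operatorname{rank}F\ge 2t$.
\end{enumerate}

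\emph{Main obstacle.} The difficulty is that the diagonal entries $f(v)\cdot f(v)$ and the entries on non-edges are not controlled by $\pi$. The adversary choosing $f$ can use this freedom to lower the rank, and $K_{t,t}$ plus a clique by itself is probably not enough to prevent it. I would first try to mimic the path argument, using extra pendant vertices to force selected vectors to be nonzero and selected pairs to be orthogonal. If that fails, I would iterate the layered $t$-tree construction so that whatever freedom the adversary uses at one layer is exhausted by pigeonhole at the next.
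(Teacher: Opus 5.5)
\paragraph{The lower bound is missing.} The paper does not prove this statement. It quotes it from Wang et al.~\cite{WWYL25} and uses it as a black box in Theorem~\ref{thm:tight}, so your proposal has to stand on its own. Your upper half is fine and not circular: the proof of Theorem~\ref{thm:main} does not use Theorem~\ref{thm:wwyl}, only Theorem~\ref{thm:tight} does. Your $P_4$ argument also correctly settles $t=1$. But the lower bound for $t\ge 2$, which is the real content of the statement, is not proved, and the one concrete step you propose fails.

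\paragraph{Why Step~4 fails.} Take all labels inside $A$ equal to $0$ and the $A$--$B$ labels forming $I$. Then $\bm f(a_i)=\bm f(b_i)=\bm e_i\in\mathbb F_2^t$ is a representation, whose Gram matrix $\bigl(\begin{smallmatrix} I&I\\ I&I\end{smallmatrix}\bigr)$ has rank $t$. More generally, over $\mathbb F_2$ one has $\det\bigl(\begin{smallmatrix} X&I\\ I&Y\end{smallmatrix}\bigr)=\det(I+XY)$. Since $Y$ is completely unconstrained by $\pi$, the adversary can make this singular whenever $X\ne 0$: pick $\bm w$ with $\bm v=X\bm w\ne 0$ and a symmetric $Y$ with $Y\bm v=\bm w$. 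So a Gram-matrix argument on a fixed set of $2t$ vertices cannot force dimension $2t$; with the labels above, your gadget is already representable in dimension $t$. Steps~2--3 do not repair this. The pigeonhole over $\bm f|_K$ presupposes gadgets that are unrepresentable for prescribed boundary vectors, and that is exactly what has to be constructed.

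\paragraph{What is needed.} You need a forcing mechanism that exploits the universal quantification over labels. The basic tool is this: if a $t$-clique $K$ gets, for each of the $2^t$ label patterns, a new common neighbour carrying that pattern, then $\bm x\mapsto(\bm x\cdot\bm f(u))_{u\in K}$ must be onto, so $\bm f(K)$ is linearly independent. This keeps the treewidth at $t$, and in a $t$-tree it is the only way an extension can fail. One must then exhibit a sequence of clique extensions that drives every assignment into $\mathbb F_2^{2t-1}$ to a dependent $t$-clique. Your $P_4$ is the case $t=1$ of this.

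Already for $t=2$ this takes some work, in four steps.
\begin{enumerate}
\item One more level of saturation forces every edge $uv$ to have a nonsingular $2\times 2$ Gram matrix. Otherwise the line $\{\bm f(u),\bm f(v)\}^\perp$ lies in their span, and a suitable pattern forces a dependent edge.
\item Hence a $0$-labelled edge $uv$ receives two distinct standard basis vectors $\bm e_i,\bm e_j$ of $\mathbb F_2^3$.
\item A vertex $x$ joined to $uv$ by two $1$-labels is then forced to the even vector $\bm e_i+\bm e_j$.
\item A vertex joined to $x$ and $u$ by $0$-labels creates a $0$-labelled edge at the even vector $\bm f(x)$. That edge has a singular Gram matrix, a contradiction.
\end{enumerate}
For general $t$, such a construction is precisely what \cite{WWYL25} supplies. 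Your plan to ``iterate the layered construction until the freedom is exhausted by pigeonhole'' does not yet contain one.
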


We shall make repeated use of the following linear-algebraic characterization of
the inversion diameter, which is essentially Observation~2.2 in Havet et al.~\cite{HHR24}.

\begin{lemma}\label{lem:vector_rep}
For every graph $G$ and every positive integer $t$, the following are equivalent:
\begin{enumerate}
    \item $\diam(I(G)) \le t$;
    \item for every edge labeling $\pi:E(G)\to\mathbb{F}_2$, there exists a map $\bm{f}:V(G)\to\mathbb{F}_2^{t}$ such that $\bm{f}(u)\cdot \bm{f}(v)=\pi(uv)$ for every edge $uv\in E(G)$.
\end{enumerate}
\end{lemma}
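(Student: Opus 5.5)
The plan is to translate both conditions into statements about the symmetric difference of two orientations, and then use that the inversion graph is regular and vertex-transitive in the right sense.

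Fix two orientations $\vec G_1,\vec G_2$ of $G$. Let $\pi:E(G)\to\mathbb F_2$ record disagreement: $\pi(uv)=1$ exactly when the arc on $uv$ differs in the two orientations. The key observation is that a sequence of inversions $X_1,\dots,X_t$ reverses the arc on $uv$ exactly as many times as there are indices $i$ with $\{u,v\}\subseteq X_i$. The order of the inversions is irrelevant, since each arc is simply flipped a number of times. Hence applying $X_1,\dots,X_t$ to $\vec G_1$ yields $\vec G_2$ if and only if, for every edge $uv$,
\[
\bigl|\{i\in[t]: u,v\in X_i\}\bigr|\equiv \pi(uv)\pmod 2 .
\]
Next, encode the sequence by $\bm f:V(G)\to\mathbb F_2^t$, setting the $i$-th coordinate of $\bm f(v)$ to $1$ iff $v\in X_i$. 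Then $\bm f(u)\cdot\bm f(v)$ is exactly the parity of the count above. So sequences of $t$ inversions taking $\vec G_1$ to $\vec G_2$ correspond bijectively to maps $\bm f$ with $\bm f(u)\cdot\bm f(v)=\pi(uv)$ on every edge.

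It remains to handle two points.
\begin{itemize}
\item \emph{Distance at most $t$ versus exactly $t$ inversions.} A path of length $s\le t$ can be padded with empty inversions $X_i=\emptyset$, equivalently zero coordinates. Conversely, a sequence containing empty or repeated sets still gives a walk of length at most $t$ in $I(G)$.
\item \emph{Every labeling arises.} Given any $\pi$ and any orientation $\vec G_1$, reversing the arcs on edges with $\pi=1$ produces an orientation $\vec G_2$ whose disagreement labeling is $\pi$.
\end{itemize}

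With these, (1)$\Rightarrow$(2) goes as follows: given $\pi$, build $\vec G_2$ from an arbitrary $\vec G_1$, take a path of length at most $t$, pad it to $t$ sets, and read off $\bm f$. For (2)$\Rightarrow$(1): given any two orientations, take their disagreement labeling $\pi$, obtain $\bm f$, and let $X_i=\{v:\bm f(v)_i=1\}$. This gives a walk of length $t$, so the distance is at most $t$.

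No step is a real obstacle. The only care needed is the bookkeeping that inversions commute and that empty or repeated inversions are harmless, so that "at most $t$" matches vectors of length exactly $t$.
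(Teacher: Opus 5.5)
Your argument is correct and is essentially the standard one: encode a sequence of $t$ inversions by indicator vectors $X_i=\{v:\bm f(v)_i=1\}$, and pad with empty inversions to get exactly $t$ coordinates. The paper gives no proof of its own and simply cites this as Observation~2.2 of Havet et al. The opening remark about regularity and vertex-transitivity is never used, since you realize every labeling directly as a disagreement labeling, so it can be dropped.
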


The second tool, and the one used directly in all of our girth arguments, is
the strong degeneracy introduced by Havet et al.~\cite{HHR24}. Its purpose is
to identify exactly what must be controlled when the vectors in
Lemma~\ref{lem:vector_rep} are assigned in the order of the vertices. For a
vertex $u$, the already assigned neighbors prescribe linear equations. At
the same time, the earlier neighborhoods of vertices occurring after $u$
describe the subspaces that the vector of $u$ must avoid. Strong degeneracy
combines these two quantities in a single ordering condition.

We now give the definition in the form introduced by Havet et
al.~\cite{HHR24}. Let $\prec$ be a total ordering of $V(G)$. For
$u,v\in V(G)$, put
\[
N^-_\prec(u)=\{x\in N(u):x\prec u\},\qquad
N^+_\prec(u)=\{x\in N(u):u\prec x\},
\]
and
\[
N_{\prec u}(v)=\{x\in N(v):x\prec u\}.
\]
Define
\[
\mathcal S_\prec(u)=
\{Y\subseteq V(G):
  \text{there is }v\in N^+_\prec(u)\text{ with }
  Y\subseteq N_{\prec u}(v)\}.
\]

\begin{definition}[Havet et al.~\cite{HHR24}]\label{def:strong-degeneracy}
All logarithms are to base $2$. The ordering $\prec$ is \emph{$t$-strong}
if, for every $u\in V(G)$,
\[
|N^-_\prec(u)|+\log |\mathcal S_\prec(u)|<t
\quad\text{when }N^+_\prec(u)\ne\emptyset,
\]
and $|N^-_\prec(u)|\le t$ when $N^+_\prec(u)=\emptyset$. A graph is
\emph{strongly $t$-degenerate} if it has a $t$-strong ordering.
\end{definition}

The decisive advantage is that a $t$-strong ordering can be used without any
further graph-specific linear algebra. The following theorem of Havet et
al.~\cite{HHR24} converts the ordering directly into the required bound on
the inversion diameter.

\begin{theorem}[Havet et al.~\cite{HHR24}]\label{thm:strong-diameter}
If a graph $G$ is strongly $t$-degenerate, then
\[
\diam(I(G))\le t.
\]
\end{theorem}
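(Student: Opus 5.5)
We assign the vectors greedily along the ordering $\prec$. By Lemma~\ref{lem:vector_rep} it suffices, for a fixed labeling $\pi:E(G)\to\mathbb F_2$, to build $\bm f:V(G)\to\mathbb F_2^t$ one vertex at a time in $\prec$-order. We maintain the following invariant: for every vertex $v$ not yet assigned, and every subset $Y$ of the already assigned neighbours of $v$, the vectors $\{\bm f(y):y\in Y\}$ are linearly independent. Under this invariant every linear system $\bm f(v)\cdot\bm f(y)=\pi(vy)$, $y\in N^-_\prec(v)$, is solvable when we reach $v$. It is solvable because its coefficient vectors are independent, so the system has rank $|N^-_\prec(v)|$ and its solution set is an affine subspace of dimension $t-|N^-_\prec(v)|$.

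The key step is to show that the invariant survives the choice of $\bm f(u)$. When we assign $u$, the set of admissible vectors is an affine subspace $A$ of size $2^{t-|N^-_\prec(u)|}$. If $N^+_\prec(u)=\emptyset$, then $u$ is no earlier neighbour of any later vertex, so only solvability matters. Solvability requires $|N^-_\prec(u)|\le t$, which is exactly the hypothesis for such $u$. If $N^+_\prec(u)\ne\emptyset$, then for each later neighbour $v$ and each $Y\subseteq N_{\prec u}(v)$ (that is, each $Y\in\mathcal S_\prec(u)$), the new set $Y\cup\{u\}$ must remain independent. By the invariant, $\{\bm f(y):y\in Y\}$ is already independent, so the only bad event is $\bm f(u)\in\operatorname{span}\{\bm f(y):y\in Y\}$. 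It suffices to require $\bm f(u)\notin\operatorname{span}(Y)$ only for these $Y$, since independence of all subsets of $Y\cup\{u\}$ containing $u$ follows from the condition for $Y$ itself.

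Next we count bad vectors. For a fixed $Y$, the intersection $A\cap\operatorname{span}(Y)$ is either empty or an affine subspace, and we need a uniform bound on its size. The cleanest approach is a counting bound. Write $A=a+W$, where $W$ is the orthogonal complement of the span $U$ of $\{\bm f(x):x\in N^-_\prec(u)\}$. Then $|A\cap\operatorname{span}(Y)|\le|W\cap \operatorname{span}(Y)|$, and the crude bound $\le 2^{t-|N^-_\prec(u)|}$ is useless. So we count differently. We need at least one vector of $A$ outside the union of the $|\mathcal S_\prec(u)|$ spans. Since $\emptyset\in\mathcal S_\prec(u)$, the span of $\emptyset$ is $\{0\}$, and we must also avoid $0$.

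This counting step is the main obstacle. The route I would try first is to use that each $Y\in\mathcal S_\prec(u)$ contributes at most one point of $A$ after a careful parametrisation. Following Havet et al., one groups by the span: $A\cap\operatorname{span}(Y)$ is covered by the sets $\{\sum_{y\in Y'}\bm f(y)\}$ for $Y'\subseteq Y$, and each such $Y'$ itself lies in $\mathcal S_\prec(u)$ because the family is closed under subsets. Hence the union of the bad sets is contained in $\{\sum_{y\in Y'}\bm f(y):Y'\in\mathcal S_\prec(u)\}$, which has at most $|\mathcal S_\prec(u)|$ elements. The hypothesis $|N^-_\prec(u)|+\log|\mathcal S_\prec(u)|<t$ gives $|\mathcal S_\prec(u)|<2^{t-|N^-_\prec(u)|}=|A|$, so a good vector exists. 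Induction along $\prec$ then completes $\bm f$, and Lemma~\ref{lem:vector_rep} yields $\diam(I(G))\le t$.
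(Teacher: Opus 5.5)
Your argument is correct and is essentially the proof sketched in the paper after the statement, following Havet et al. You assign vectors greedily along $\prec$ and keep the already-assigned part of each unassigned vertex's neighbourhood linearly independent. The union of the spans to avoid is exactly $\{\sum_{y\in Y'}\bm f(y):Y'\in\mathcal S_\prec(u)\}$, of size at most $|\mathcal S_\prec(u)|<2^{t-|N^-_\prec(u)|}=|A|$. The exploratory detour through $A=a+W$ and $W\cap\operatorname{span}(Y)$ is unnecessary and can be dropped from a final write-up.
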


To indicate why this theorem applies, fix an edge labeling and assign the
vectors in increasing order. The vectors already assigned to
$N^-_\prec(u)$ are kept linearly independent whenever they form the earlier
neighborhood of an unassigned vertex. The equations at $u$ leave an affine
space of size at least $2^{t-|N^-_\prec(u)|}$, while the union of the spans
that must be avoided has size at most $|\mathcal S_\prec(u)|$. The strict
inequality in Definition~\ref{def:strong-degeneracy} leaves an admissible
vector. Accordingly, in Section~\ref{sec:strong-girth} we work only on
constructing $t$-strong orderings and then apply
Theorem~\ref{thm:strong-diameter} directly.

Recall that a proper coloring $c:V(G)\to[k]$ is \emph{acyclic} if, for every
pair of distinct colors $i,j\in[k]$, the subgraph induced by the vertices of
colors $i$ and $j$ is a forest. The minimum number of colors in an acyclic
coloring of $G$ is the \emph{acyclic chromatic number}, denoted $\chi_a(G)$.

The following results on acyclic colorings of planar graphs will be used.

\begin{theorem}[Borodin~\cite{Borodin79}]\label{thm:borodin}
Every planar graph has acyclic chromatic number at most $5$.
\end{theorem}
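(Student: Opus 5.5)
This is Borodin's theorem, quoted from~\cite{Borodin79} and not reproved in this paper, so what follows is only the strategy I would use to reprove it. It is the classical one: take a minimal counterexample and rule it out by discharging.

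\textbf{Setup.} Let $G$ be a planar graph with $\chi_a(G)>5$ and $|V(G)|+|E(G)|$ minimum. Acyclic colorability is monotone under taking subgraphs, so an acyclic $5$-coloring of any plane supergraph restricts to one of $G$. Hence we may assume $G$ is a triangulation: add edges inside faces and keep a minimal counterexample among triangulations. Then $\delta(G)\ge 3$. Moreover, every proper minor of $G$ is acyclically $5$-colorable, and in particular so is every graph $G/e$ obtained by contracting an edge; these graphs stay planar.

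\textbf{Reducible configurations.} Next I would build a list of configurations that cannot occur in $G$. In each case we contract an edge $vw$ (or delete a vertex $v$ and triangulate its link), color the smaller graph by minimality, and extend the coloring to $v$, or to the split pair $v,w$. Extending means choosing a color that is proper and creates no bichromatic cycle through $v$.

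A bichromatic cycle through $v$ must use two neighbors of $v$ of the same color. So a color is safe whenever it is absent from $N(v)$, or the neighbors carrying it are not joined by a $2$-colored path. This settles the case $d(v)=3$ at once, and handles most of $d(v)=4$ after a short recoloring of one neighbor. For degree $5$ we need configurations in which $v$ has a neighbor of degree at most $6$ (or several such neighbors). In those cases we recolor one or two neighbors by swapping colors along short paths, to break every potential two-colored path.

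\textbf{Discharging.} Give each vertex charge $d(v)-6$; by Euler's formula the total charge is $-12$. Vertices of degree at least $7$ send charge to adjacent vertices of degree $5$ (and to degree-$4$ vertices, if those survive the reductions), by rules tuned to the reducible list. Then verify that every vertex ends with nonnegative charge unless it lies in a reducible configuration. This contradicts the total of $-12$.

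\textbf{Main obstacle.} The hard part is the degree-$5$ (and residual degree-$4$) reductions. There a greedy choice of color fails, and Kempe-type recolorings must be controlled so that they do not create new bichromatic cycles elsewhere. I would begin by isolating a lemma: a vertex $v$ can be recolored whenever some color $c$ appears on at most one neighbor of $v$, or on two neighbors not joined by a path in the subgraph induced by the two colors. I would then prove each configuration reducible by case analysis on the repeated color patterns around $v$. I expect this case analysis to be the bulk of the work, as in Borodin's original proof.
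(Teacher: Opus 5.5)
The paper gives no proof of this statement. Borodin's theorem is imported as a black box and used only to supply $k=5$ in Theorem~\ref{thm:main}. Your outline has the right shape: a vertex-minimal triangulated counterexample, reducible configurations, and charges $d(v)-6$ summing to $-12$. This is also the general shape of Borodin's original discharging argument. But it is not a proof, and the gap is essentially the whole theorem. You never fix an unavoidable set, never prove a single degree-$4$ or degree-$5$ configuration reducible, and never state discharging rules, so nothing beyond $d(v)=3$ is established. That list and its reducibility proofs are where the real work of Borodin's long paper lies.

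There is also a minor setup issue. Minimizing $|V|+|E|$ is incompatible with passing to a triangulation. Minimize $|V|$ among triangulated counterexamples instead. Then only minors with fewer vertices, not all proper minors, are guaranteed colorable. Since you only contract edges and delete vertices, this costs nothing.

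The one concrete tool you propose is wrong as stated. A color $a$ can be given to $v$ if and only if both of the following hold:
\begin{itemize}
\item $a$ is absent from $N(v)$;
\item for \emph{every} color $b$ occurring at least twice on $N(v)$, no two $b$-colored neighbors are joined by an $\{a,b\}$-colored path in $G-v$.
\end{itemize}
So absence from $N(v)$ is necessary but not sufficient, and a color carried by a neighbor can never be used.

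Your lemma (``some color $c$ appears on at most one neighbor, or on two neighbors not joined by a two-colored path'') is false. Take $d(v)=5$ with link $w_1\cdots w_5$ colored $1,2,1,2,3$. Add $y$ of color $5$ adjacent to $w_1,w_2,w_3$, and $x$ of color $4$ adjacent to $w_1,y,w_3,w_4,w_5$. This graph is a triangulation, and the coloring of $G-v$ is acyclic. Color $3$ occurs only once on $N(v)$, yet both free colors fail: $vw_1xw_3$ and $vw_1yw_3$ become bichromatic cycles.

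Degree $3$ works only because in a triangulation the three neighbors are pairwise adjacent, hence distinctly colored. A repeated pair joined by paths in \emph{both} free colors is exactly the obstruction every degree-$4$ and degree-$5$ reduction must overcome by recoloring neighbors. Your plan gives no mechanism for controlling the new bichromatic paths such recolorings create, and that is the missing idea.
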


\begin{theorem}[Borodin et al.~\cite{BKW99}]\label{thm:bkw}
Let $G$ be a planar graph.
\begin{enumerate}
    \item If $G$ has girth at least $5$, then $\chi_a(G)\le 4$.
    \item If $G$ has girth at least $7$, then $\chi_a(G)\le 3$.
\end{enumerate}
\end{theorem}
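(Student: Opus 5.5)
Both parts would be proved by the same scheme: a minimal counterexample together with discharging. For part~2, let $G$ be a planar graph of girth at least $7$ with $\chi_a(G)>3$ and $|V(G)|+|E(G)|$ minimum. Then $G$ is connected and has minimum degree at least $2$. A vertex $v$ of degree $1$ is reducible: acyclically $3$-color $G-v$ and give $v$ any color different from that of its neighbor, which cannot create a bichromatic cycle. Euler's formula with faces of length at least $7$ gives $\sum_v (5d(v)-14)+\sum_f (2|f|-14)=-28<0$. Since every face has nonnegative charge, the only negatively charged objects are vertices of degree $2$, each with charge $-4$, while a vertex of degree $d\ge3$ has charge $5d-14\ge1$.

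\textbf{Reducible configurations.} The core of the proof is a list of configurations that cannot occur in $G$; I would look for them as \emph{long threads} of $2$-vertices, and as $3$-vertices incident to several threads. To reduce a $2$-vertex $v$ with neighbors $x,y$, color $G-v$ by minimality. If $c(x)\ne c(y)$, give $v$ the third color; every cycle through $v$ then contains three colors, so none is bichromatic. If $c(x)=c(y)=a$, the danger is a path from $x$ to $y$ using only the colors $a$ and $b$, where $b$ is the color chosen for $v$. Each of the two choices for $b$ can be blocked by such a path. On a thread $x\,v_1v_2\cdots v_k\,y$ one instead deletes the whole thread and recolors it. There is then enough freedom to insert a vertex with a third color into the thread, which destroys any bichromatic cycle through it; by my estimate, threads of length about $3$ suffice. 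I would also show that a $3$-vertex whose incident threads are too long in total is reducible, recoloring that vertex while keeping its three neighbors' constraints satisfied.

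\textbf{Discharging.} Rules send charge from each $\ge3$-vertex along its threads to the $2$-vertices on them, and from long faces to $2$-vertices on their boundaries, so that every final charge is nonnegative. This contradicts the total being $-28$. For part~1, with girth at least $5$ and $4$ colors, I would run the same scheme with charges $3d(v)-10$ on vertices and $2|f|-10$ on faces, whose total is $-20$. The negative charges now sit on $2$-vertices, with charge $-4$, and on $3$-vertices, with charge $-1$. The reducible configurations would be $2$-vertices adjacent to low-degree vertices and $3$-vertices with many $2$-neighbors. With four colors, a deleted low-degree vertex sees at most three neighbor colors, and recoloring arguments supply the slack needed to avoid bichromatic paths.

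\textbf{Main obstacle.} The hardest step is proving reducibility under the acyclicity constraint, which is nonlocal: a bichromatic path between two equally colored neighbors may be arbitrarily long. I would first try to rule out this case by swapping the two colors on a single bichromatic component. If that fails, I would try to force the neighbors to receive distinct colors by identifying or adding auxiliary edges, for example between the two ends of a thread, in a smaller graph that remains planar with the required girth. Choosing this list of configurations so that it is both reducible and sufficient for the discharging is where I expect most of the work; this is exactly what Borodin et al.~\cite{BKW99} carry out.
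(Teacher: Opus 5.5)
The paper does not prove this theorem. It quotes it from Borodin, Kostochka and Woodall, and your last sentence also defers to that paper. So your proposal is a plan rather than a proof, and on its own terms it has a concrete gap in part~1.

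\textbf{Part 1.} Every configuration you propose contains a $2$-vertex. The dodecahedron, and more generally any planar graph of girth at least $5$ with minimum degree $3$, contains no $2$-vertex at all. Under your charges $3d(v)-10$ and $2|f|-10$, every vertex of the dodecahedron has charge $-1$ and every face has charge $0$. There is no positive charge anywhere to redistribute, so the discharging implication ``no listed configuration $\Rightarrow$ all final charges nonnegative'' fails whatever rules you choose.

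Any unavoidable set must therefore contain a configuration built only from $3$-vertices with $3$-neighbors on $5$-faces. That is exactly where your claimed slack disappears. If a deleted $3$-vertex $v$ has neighbors colored $a,a,b$, only two colors $c,d$ remain for $v$. Both can be blocked at once, by an $(a,c)$-path and an $(a,d)$-path between the two $a$-colored neighbors. Your proposed remedies do not repair this:
\begin{itemize}
\item exchanging two colors on a bichromatic component does not preserve acyclicity, since a recolored vertex can close a new bichromatic cycle with a third color;
\item adding an edge between, or identifying, the two equally colored neighbors may create cycles shorter than the girth bound, so the smaller graph leaves the class.
\end{itemize}
The missing idea is a genuine reducibility argument for such all-$3$-vertex configurations, for example one that recolors several vertices of a $5$-face simultaneously.

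\textbf{Part 2.} The local reductions you actually give are correct. In fact two consecutive $2$-vertices already suffice, since $x\,v_1v_2\,y$ can always be colored so that all three colors appear. However, you neither fix the unavoidable list nor state the face rules, and these are not routine. Degree counting alone does not exclude graphs with independent $2$-vertices in which every $3$-vertex has exactly one $2$-neighbor: such graphs have average degree $8/3<14/5$. So the long faces must carry real weight in the discharging, and you have not shown how. As it stands, the statement is justified only by the citation, as in the paper.
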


\section{Inversion Diameter and Acyclic Chromatic Number}

We divide the proof of Theorem~\ref{thm:main} into two steps. First, we use an
acyclic coloring to construct a structured inversion strategy of dimension
$2k$. We then exploit this structure to reduce the dimension to $2k-2$.

\begin{proof}[Proof of Theorem~\ref{thm:main}]
If $k=1$, then $G$ has no edges, and the conclusion is trivial. Assume
$k\ge 2$.
We shall construct, for every edge labeling $\pi:E(G)\to\mathbb F_2$, a map
\[
\bm{f}^{*}:V(G)\to\mathbb F_2^{2k-2}
\]
such that
\[
\bm{f}^{*}(u)\cdot \bm{f}^{*}(v)=\pi(uv)
\qquad\text{for every }uv\in E(G).
\]
By Lemma~\ref{lem:vector_rep} this will prove the theorem.

\textbf{Step 1.} We first construct a map
\[
\bm{f}:V(G)\to\mathbb F_2^{2k}
\]
such that
\[
\bm{f}(u)\cdot \bm{f}(v)=\pi(uv)
\qquad\text{for every }uv\in E(G).
\]
Let $\bm{e}_1,\bm{e}_2,\dots,\bm{e}_{2k}$ be the standard basis of $\mathbb F_2^{2k}$. Define
\[
\bm{q}_r=\bm{e}_{2r-1}+\bm{e}_{2r},\qquad r=1,\dots,k,
\]
and let
\[
\mathcal{Q}=\langle \bm{q}_1,\dots,\bm{q}_k\rangle.
\]
Since $1+1=0$ in $\mathbb{F}_2$, the subspace $\mathcal{Q}$ is totally isotropic:
\[
\bm{q}_r\cdot \bm{q}_s=0
\qquad\text{for all }r,s\in[k].
\]
Next, assign a distinct vector outside $\mathcal{Q}$ for each color class.
Let
\[
\bm{p}_i=\bm{e}_{2i},\qquad i=1,\dots,k.
\]
These vectors act as linear functionals on $\mathcal{Q}$: for $i=1,\dots,k$ and $r=1,\dots,k$,
\[
\lambda_i(\bm{q}_r):=\bm{p}_i\cdot \bm{q}_r=\delta_{ir},
\]
so $\{\lambda_i:i\in[k]\}$ is the standard basis of $\mathcal{Q}^*$.

Now fix an edge labeling $\pi:E(G)\to\mathbb F_2$.
Let $c:V(G)\to[k]$ be the given acyclic coloring. For each unordered pair
of distinct colors $\{i,j\}$, consider the bichromatic subgraph
\[
G_{ij}=G[c^{-1}(\{i,j\})].
\]
Since the coloring is acyclic, $G_{ij}$ is a forest. On this forest we
introduce variables as follows. If $u$ has color $i$, it receives a
variable $x_j(u)$; if $v$ has color $j$, it receives a variable
$x_i(v)$. For every edge $uv\in E(G_{ij})$ with $c(u)=i$ and
$c(v)=j$, impose the equation
\[
x_j(u)+x_i(v)=\pi(uv).
\]
This system is solvable because $G_{ij}$ is a forest: on each connected
component choose one root variable arbitrarily, and then propagate the values
along the tree edges. There is no cycle, so no consistency condition can
fail. Doing this independently for all pairs $\{i,j\}$, we obtain, for each
vertex $v$ of color $i$, values
\[
x_j(v)\in\mathbb F_2\qquad\text{for every }j\ne i.
\]
For convenience we also set $x_i(v)=0$.
Let
\[
\bm{q}(v)=\sum_{j=1}^kx_j(v)\,\bm{q}_j.
\]
Then for every $i\in[k]$,
\[
\bm{p}_i\cdot \bm{q}(v)=\sum_{j=1}^kx_j(v)\,\bm{p}_i\cdot \bm{q}_j = x_i(v).
\]
Define
\[
\bm{f}(v)=\bm{p}_{c(v)}+\bm{q}(v).
\]
We now check an edge. Let $uv\in E(G)$, and write $c(u)=i$,
$c(v)=j$ with $i\ne j$. Then
\[
\begin{aligned}
\bm{f}(u)\cdot \bm{f}(v)
&=(\bm{p}_i+\bm{q}(u))\cdot(\bm{p}_j+\bm{q}(v))\\
&=\bm{p}_i\cdot \bm{p}_j+\bm{p}_i\cdot \bm{q}(v)+\bm{p}_j\cdot \bm{q}(u)+\bm{q}(u)\cdot \bm{q}(v)\\
&=0+x_i(v)+x_j(u)+0\\
&=\pi(uv).
\end{aligned}
\]
Thus the required map $\bm{f}$ exists for every $\pi$.

\textbf{Step 2.} We now reduce the dimension to $2k-2$.
Observe that in the construction above, $\bm{q}_i$ is orthogonal to $\bm{f}(v)$ for every vertex $v$ with $c(v)\neq i$. Consequently, replacing $\bm{f}(v)$ by $\bm{f}(v)+\bm{q}_{c(v)}$ does not change the value of $\bm{f}(u)\cdot \bm{f}(w)$ for any edge $uw\in E(G)$. This indicates that the construction carries redundant degrees of freedom, which we now eliminate.

By ignoring the last two coordinates $\bm{e}_{2k-1}$ and $\bm{e}_{2k}$, we regard
\[
\bm{p}_1,\dots,\bm{p}_{k-1},\quad \bm{q}_1,\dots,\bm{q}_{k-1}
\]
as vectors in $\mathbb{F}_2^{2k-2}$. Define
\[
\bm{p}_k^*:=\sum_{i=1}^{k-1} \bm{p}_i.
\]
For a vertex $v$ with $c(v)=k$, set
\[
\bm{q}^{*}(v)=\sum_{i=1}^{k-1}x_i(v)\,\bm{q}_i,
\qquad
\bm{f}^{*}(v)=\bm{p}_k^*+\bm{q}^{*}(v).
\]
For a vertex $v$ with $c(v)=i\neq k$, set
\[
\alpha_i(v):=1+\sum_{j=1}^{k}x_j(v),
\]
\[
\bm{q}^{*}(v)=\alpha_i(v)\,\bm{q}_i+\sum_{j=1}^{k-1}x_j(v)\,\bm{q}_j,
\qquad
\bm{f}^{*}(v)=\bm{p}_i+\bm{q}^{*}(v).
\]

We verify that $\bm{f}^{*}(u)\cdot \bm{f}^{*}(v)=\pi(uv)$ for every edge $uv\in E(G)$.
Write $c(u)=i$ and $c(v)=j$ with $i\ne j$.

\textit{\textbf{Case 1}: $i,j\in[k-1]$.} Since $\bm{q}^{*}(u),\bm{q}^{*}(v)\in\langle \bm{q}_1,\dots,\bm{q}_{k-1}\rangle$ and this subspace is totally isotropic, we have $\bm{q}^{*}(u)\cdot \bm{q}^{*}(v)=0$. Moreover $\bm{p}_i\cdot \bm{p}_j=0$. Hence
\[
\begin{aligned}
\bm{f}^{*}(u)\cdot \bm{f}^{*}(v)
&=\bm{p}_i\cdot \bm{q}^{*}(v)+\bm{p}_j\cdot \bm{q}^{*}(u)\\
&=x_i(v)+x_j(u)=\pi(uv).
\end{aligned}
\]

\textit{\textbf{Case 2}: $i=k$ and $j\in[k-1]$.} Again $\bm{q}^{*}(u)\cdot \bm{q}^{*}(v)=0$. We have $\bm{p}_k^*\cdot \bm{p}_j=1$ and
\[
\bm{p}_k^*\cdot \bm{q}^{*}(v)
=\Bigl(\sum_{r=1}^{k-1}\bm{p}_r\Bigr)\cdot\Bigl(\alpha_j(v)\,\bm{q}_j+\sum_{m=1}^{k-1}x_m(v)\,\bm{q}_m\Bigr)
=\alpha_j(v)+\sum_{m=1}^{k-1}x_m(v)
=1+x_k(v).
\]
Moreover $\bm{p}_j\cdot \bm{q}^{*}(u)=x_j(u)$. Therefore,
\[
\bm{f}^{*}(u)\cdot \bm{f}^{*}(v)
=1+\bigl(1+x_k(v)\bigr)+x_j(u)
=x_k(v)+x_j(u)=\pi(uv).
\]

\textit{\textbf{Case 3}: $i\in[k-1]$ and $j=k$.} This is symmetric to Case~2.

All cases agree, so the map $\bm{f}^{*}:V(G)\to\mathbb{F}_2^{2k-2}$ has the required property. By Lemma~\ref{lem:vector_rep}, this completes the proof of Theorem~\ref{thm:main}.
\end{proof}

\begin{proof}[Proof of Theorem~\ref{thm:planar8}]
By Theorem~\ref{thm:borodin}, every planar graph admits an acyclic $5$-coloring. Applying Theorem~\ref{thm:main} with $k=5$ gives $\diam(I(G))\le 2\cdot 5-2=8$.
\end{proof}

Theorems~\ref{thm:borodin}, \ref{thm:bkw}, and \ref{thm:main} yield the
preliminary bounds $8$ for planar graphs of girth at least $4$ and $6$ for
planar graphs of girth at least $5$ (and hence also at least $6$). In
Section~\ref{sec:strong-girth}, we improve these bounds to $7$, $5$, and $4$,
respectively.

We now prove that Theorem~\ref{thm:main} is sharp.

\begin{theorem}\label{thm:tight}
For every integer $k\ge 2$, there exists a graph $G$ with acyclic chromatic
number $k$ such that
\[
\diam(I(G))=2k-2.
\]
\end{theorem}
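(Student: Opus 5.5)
Set $t=k-1\ge 1$ and take the graph $G$ supplied by Theorem~\ref{thm:wwyl}, so that $\tw(G)=k-1$ and $\diam(I(G))=2k-2$. Theorem~\ref{tw_acyclic} then gives $\chi_a(G)\le k$. The diameter is already exactly $2k-2$, so the only remaining task is to pin $\chi_a(G)$ down to exactly $k$.

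For the reverse inequality we use Theorem~\ref{thm:main}. Suppose $\chi_a(G)=k'$ with $k'\le k-1$. If $k'\ge 2$, Theorem~\ref{thm:main} gives $\diam(I(G))\le 2k'-2\le 2k-4<2k-2$, a contradiction. If $k'=1$, then $G$ has no edges, so $\diam(I(G))=0<2k-2$ because $k\ge 2$; this is again a contradiction. Hence $\chi_a(G)=k$, and $G$ has acyclic chromatic number $k$ and inversion diameter $2k-2$, as Theorem~\ref{thm:tight} claims.

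There is no real obstacle here, since all the difficulty sits inside the cited construction of Theorem~\ref{thm:wwyl}. The one point to check is that Theorem~\ref{thm:wwyl} is stated for every $t\ge 1$, so it covers the smallest case $k=2$ (that is, $t=1$): there $G$ is a forest with $\diam(I(G))=2$, and the argument above gives $\chi_a(G)=2$.
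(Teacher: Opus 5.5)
Your proof is correct and follows the paper's argument exactly: you take the graph from Theorem~\ref{thm:wwyl} with $t=k-1$, get $\chi_a(G)\le k$ from Theorem~\ref{tw_acyclic}, and rule out $\chi_a(G)<k$ via Theorem~\ref{thm:main}. Your separate treatment of $k'=1$ is harmless but unnecessary, since Theorem~\ref{thm:main} already covers $k=1$ (it gives $\diam(I(G))\le 0$ there).
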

\begin{proof}
By Theorem~\ref{thm:wwyl}, for every integer $t\ge 1$ there exists a graph of treewidth $t$ whose inversion diameter equals $2t$. Taking $t=k-1$, we obtain a graph $G$ with treewidth $k-1$ and $\diam(I(G))=2k-2$. By Theorem~\ref{tw_acyclic}, $G$ has acyclic chromatic number at most $k$, while Theorem~\ref{thm:main} implies that its acyclic chromatic number is at least $k$ (otherwise the inversion diameter would be strictly smaller than $2k-2$). Hence $\chi_a(G)=k$ and the bound is attained.
\end{proof}

\section{Strong Degeneracy and Planar Graphs}\label{sec:strong-girth}

Theorem~\ref{thm:strong-diameter} is the starting point of this section. Thus
we do not construct vector representations separately for girths $4$, $5$,
and $6$. Instead, we construct strong orderings and apply the theorem of
Havet et al.~\cite{HHR24} directly. The same ordering calculation treats
every girth at least $5$ and, in particular, proves
Theorems~\ref{thm:girth5-intro} and \ref{thm:girth6-intro}. The case of girth
$4$ is the limiting case of
the calculation and requires one additional planar estimate. We first give
the uniform computation for $g\ge5$. We then return to girth $4$ at the end
of the section, explain exactly why the uniform parameters break down, and
replace the missing estimate by a separate argument.

We first explain the auxiliary graph used to expose the strong-ordering
condition. Imagine constructing the final ordering from right to left by
successively removing vertices. The original vertices that have not yet
been removed are called variable vertices. Once a vertex is removed, it
will occur later in the final ordering; we replace it by a constrain vertex
adjacent to its neighbors that are still variable vertices. Consequently,
an incident constrain vertex of degree $r$ records an earlier neighborhood
of size $r-1$ and contributes at most $2^{r-1}-1$ nonempty subsets to
$\mathcal S_\prec(u)$. This is precisely the second term in Havet's
$t$-strong condition.

\begin{definition}\label{def:plane-constraint-system}
A \emph{plane constraint system} is a plane graph $P$ with a partition
$V(P)=X\cup A$, where the vertices in $X$ are called \emph{variable
vertices} and those in $A$ are called \emph{constrain vertices}. There is
no edge with both endpoints in $A$. An edge with both endpoints in $X$ is a
\emph{direct edge}, and every constrain vertex has degree at least $2$. For
$v\in X$, let $q_P(v)$ be its direct degree and put
\begin{equation}\label{eq:constraint-weight}
 W_P(v)=\sum_{a\in N_P(v)\cap A}\bigl(2^{d_P(a)-1}-1\bigr).
\end{equation}
The system is \emph{canonical} if distinct constrain vertices have distinct
neighborhoods in $X$.
\end{definition}

To construct the ordering recursively, we must update the auxiliary graph
after choosing its next variable vertex. The following reduction does this
while preserving the information needed for
Definition~\ref{def:strong-degeneracy}. The \emph{reduction} of $P$ at a
variable vertex $v$ is defined as follows. Delete $v$ and all its incident
edges, discard every constrain vertex whose degree becomes at most $1$, and,
when $q_P(v)\ge2$, introduce a new constrain vertex adjacent precisely to the
direct neighbors of $v$. Finally, retain only one copy of each constrain
vertex neighborhood. The new constrain vertex can be placed at the former
position of $v$, with its incident edges drawn along the deleted direct edges.
Consequently,
the reduction preserves planarity and does not decrease the girth. It also
preserves canonicality.

The next lemma makes the connection with Havet's definition explicit. At
the moment when a variable vertex $v$ is removed, $q_P(v)$ will be its number
of earlier neighbors, while $1+W_P(v)$ bounds
$|\mathcal S_\prec(v)|$. Therefore the two inequalities below are exactly
what is needed for a $t$-strong ordering.

\begin{lemma}\label{lem:constraint-order}
Let $G$ be a plane graph and let $t$ be a positive integer. Start with the
constraint system having variable-vertex set $V(G)$, direct-edge set $E(G)$,
and no constrain vertices. Suppose that, until no variable vertex remains,
one can reduce at a variable vertex $v$ satisfying
\begin{equation}\label{eq:strong-reduction}
 q_P(v)<t
 \qquad\text{and}\qquad
 W_P(v)<2^{t-q_P(v)}-1.
\end{equation}
Then $G$ is strongly $t$-degenerate.
\end{lemma}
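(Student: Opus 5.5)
The ordering is built from right to left. Let $v_n,v_{n-1},\dots,v_1$ be the variable vertices at which we reduce, listed in the order of reduction. Define $\prec$ by $v_1\prec v_2\prec\dots\prec v_n$, so the first vertex reduced is the last in the ordering. We then check Definition~\ref{def:strong-degeneracy} vertex by vertex, using the following invariant of the constraint system $P_m$ obtained just before reducing $v_m$. The variable vertices of $P_m$ are exactly $v_1,\dots,v_m$. Its direct edges are exactly the edges of $G$ among these vertices. For every $w\in\{v_{m+1},\dots,v_n\}$ with $|N_{\prec v_{m+1}}(w)|\ge2$, some constrain vertex of $P_m$ has $X$-neighbourhood exactly $N_{\prec v_{m+1}}(w)=N(w)\cap\{v_1,\dots,v_m\}$. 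Conversely, every constrain vertex has $X$-neighbourhood of this form for some such $w$.

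The invariant is proved by induction on the number of reductions. Reducing at $v_m$ deletes $v_m$, so each neighbourhood $N(w)\cap\{v_1,\dots,v_m\}$ shrinks to $N(w)\cap\{v_1,\dots,v_{m-1}\}$. Constrain vertices left with degree at most $1$ are discarded, matching the requirement $|N_{\prec v_m}(w)|\ge2$. The new constrain vertex, created when $q(v_m)\ge2$, records $N(v_m)\cap\{v_1,\dots,v_{m-1}\}$. Deduplication only merges equal neighbourhoods. Direct edges among the remaining variable vertices are untouched. This bookkeeping is routine.

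Now fix $u=v_m$ and read off Havet's quantities in $P_m$. First, $|N^-_\prec(u)|=q_{P_m}(u)$, since the direct neighbours of $u$ are its neighbours among $v_1,\dots,v_{m-1}$. Next consider $\mathcal S_\prec(u)$. Each $Y\in\mathcal S_\prec(u)$ is $\emptyset$ or a nonempty subset of $N_{\prec u}(w)$ for some $w\in N^+_\prec(u)$. The set $N_{\prec u}(w)$ is $N_{\prec v_{m}}(w)\cup\dots$; more precisely, $N(w)\cap\{v_1,\dots,v_m\}$ contains $u$, and $N_{\prec u}(w)$ equals that set minus $u$. When $N(w)\cap\{v_1,\dots,v_m\}$ has size $r\ge2$, the invariant gives a constrain vertex $a$ adjacent to $u$ with $d_{P_m}(a)=r$. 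It contributes at most $2^{r-1}-1$ nonempty subsets. When the size is $1$, i.e.\ $N_{\prec u}(w)=\emptyset$, only $Y=\emptyset$ arises. Several $w$ may share one constrain vertex, but they give the same family of subsets, so this only helps. Hence $|\mathcal S_\prec(u)|\le 1+W_{P_m}(u)$.

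If $N^+_\prec(u)\ne\emptyset$, the hypothesis \eqref{eq:strong-reduction} gives $1+W<2^{t-q}$. Therefore
\[
|N^-_\prec(u)|+\log|\mathcal S_\prec(u)|\le q+\log(1+W)<t .
\]
If $N^+_\prec(u)=\emptyset$, then $|N^-_\prec(u)|=q<t$ directly. So $\prec$ is $t$-strong, and $G$ is strongly $t$-degenerate.

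The main obstacle I expect is the invariant: we must show that discarding, merging, and creating constrain vertices keeps an exact correspondence between constrain vertices adjacent to $u$ and the sets $N_{\prec u}(w)$. In particular, canonicality must not lose a neighbourhood, and a constrain vertex's degree must equal $|N_{\prec u}(w)|+1$ rather than merely bounding it. I would handle this by stating the invariant as an equality of neighbourhood families and checking the three operations of the reduction one at a time. Planarity plays no role in this lemma.
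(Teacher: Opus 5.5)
Your proposal is correct and follows the paper's proof. You take the reverse of the reduction sequence as the ordering and identify $q_P(u)$ with $|N^-_\prec(u)|$. You then bound $|\mathcal S_\prec(u)|\le 1+W_P(u)$ by matching each later neighbour $w$ with nonempty $N_{\prec u}(w)$ to an incident constrain vertex of degree $|N_{\prec u}(w)|+1$, noting that merged copies and empty recorded sets cause no harm. Your explicit invariant on constrain-vertex neighbourhoods just spells out bookkeeping that the paper leaves implicit, and you are right that planarity is not used.
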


The proof reads the reduction sequence backwards and checks the two terms in
Definition~\ref{def:strong-degeneracy} separately.

\begin{proof}
Let $v_1,\dots,v_n$ be the reduction sequence, and order the vertices in the
reverse order:
\[
v_n\prec v_{n-1}\prec\cdots\prec v_1.
\]
Consider the moment at which $v_i$ is reduced. At this moment the set of
variable vertices is
\[
\{v_i,v_{i+1},\dots,v_n\}.
\]
Hence its direct neighbors at this
moment are the earlier neighbors of $v_i$ in the final ordering, and
\begin{equation}\label{eq:q-earlier}
q_P(v_i)=|N^-_\prec(v_i)|.
\end{equation}

Every constrain vertex incident with $v_i$ records a later neighbor $v_j$, where
$j<i$, together with the set $N_{\prec v_i}(v_j)$. Constrain vertices with
the same recorded set may have been
identified, which does not enlarge a union of power sets. Later neighbors
with empty recorded set contribute only the empty set. It follows that
\[
|\mathcal S_\prec(v_i)|
 \le 1+\sum_{a\in N_P(v_i)\cap A}
       \bigl(2^{d_P(a)-1}-1\bigr)
 =1+W_P(v_i).
\]
If $v_i$ has a later neighbor, \eqref{eq:strong-reduction} therefore gives
\[
|N^-_\prec(v_i)|+\log|\mathcal S_\prec(v_i)|<t.
\]
If it has no later neighbor, then \eqref{eq:q-earlier} and
\eqref{eq:strong-reduction} give $|N^-_\prec(v_i)|<t$. Thus $\prec$ is a
$t$-strong ordering.
\end{proof}

\subsection{A uniform computation for girth at least five}

We now explain the parameters used in the uniform calculation. Fix
$g\ge5$. The Euler bound for a plane graph of girth at least $g$ has density
coefficient $g/(g-2)$; we denote this coefficient by $\alpha_g$. Our charge
argument will find a variable vertex whose direct degree $q$ satisfies
$q/2<\alpha_g$. Hence $q<2\alpha_g$, and $D_g$ below is the largest integer
that $q$ can be. Thus the first two parameters encode only the planar density
and its immediate degree consequence:
\begin{equation}\label{eq:general-parameters}
\alpha_g=\frac{g}{g-2},\qquad
D_g=\lceil2\alpha_g\rceil-1.
\end{equation}

An incident constrain vertex of degree $r$ contributes
$1-\alpha_g/r$ to the local Euler charge and contributes
$2^{r-1}-1$ to $W_P(v)$. To convert an upper bound on the first quantity
into an upper bound on the second, we take the largest possible ratio:
\begin{equation}\label{eq:rho-g}
\rho_g=
\max_{2\le r\le D_g}
\frac{2^{r-1}-1}{1-\alpha_g/r}.
\end{equation}

Finally, suppose that the selected variable vertex has direct degree $q$.
The integer $C_g(q)$ is an upper bound for $1+W_P(v)$. We choose $T(g)$ so
that $C_g(q)<2^{T(g)-q}$ simultaneously for every possible $q$. This is
exactly the inequality required by Definition~\ref{def:strong-degeneracy}.
For $0\le q\le D_g$, put
\[
C_g(q)=
\left\lceil\rho_g\left(\alpha_g-\frac q2\right)\right\rceil
\]
and
\begin{equation}\label{eq:T-g}
T(g)=\max_{0\le q\le D_g}
\left(q+1+\left\lfloor\log C_g(q)\right\rfloor\right).
\end{equation}

\begin{theorem}\label{thm:general-strong-girth}
For every integer $g\ge5$, every finite simple planar graph of girth at least
$g$ is strongly $T(g)$-degenerate.
\end{theorem}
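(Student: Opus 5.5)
By Lemma~\ref{lem:constraint-order}, it suffices to show the following. In every canonical plane constraint system $P$ arising from a plane graph of girth at least $g$ by successive reductions, with at least one variable vertex, some variable vertex $v$ satisfies \eqref{eq:strong-reduction} with $t=T(g)$. Since reductions preserve planarity, girth and canonicality, the recursion then runs until no variable vertex remains.

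\textbf{Setting up the charge.} Work with the plane bipartite-plus-direct graph $P$ itself. It is simple, because canonicality rules out parallel constrain vertices with the same two neighbours. Every cycle of $P$ corresponds to a closed walk in the original graph of at least the same length (constrain vertices stand in for removed vertices, and girth is not decreased), so $P$ has girth at least $g$. Euler's formula then gives $|E(P)|\le \alpha_g(|V(P)|-2)<\alpha_g|V(P)|$. Assign each vertex $x$ charge $d_P(x)/2-\alpha_g$; the total is negative.

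Redistribute by having each constrain vertex $a$ of degree $r$ send $\tfrac12-\alpha_g/(2r)$ to each of its $r$ neighbours. This leaves $a$ at $r/2-\alpha_g-r(\tfrac12-\alpha_g/(2r))$. That is not obviously nonnegative, so I would calibrate differently. Discharge so that each constrain vertex ends at exactly $0$ by sending $\tfrac12\cdot$ (its excess over $\alpha_g$ split evenly), roughly $(1-\alpha_g/r)/2$ per edge. Alternatively, split each edge's half-weight between endpoints as the paper's phrase ``contributes $1-\alpha_g/r$'' suggests: charge a variable vertex $v$ with $q/2+\sum_{a\ni v}\tfrac12(1-\alpha_g/d(a))$-type weights, and arrange the normalization so that constrain vertices of degree $\ge 3>\alpha_g$ have nonnegative final charge. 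Degree-$2$ constrain vertices need care, since $\alpha_g<2$ for $g\ge5$ makes them fine. This is exactly why $g=4$, with $\alpha_4=2$, is the boundary. Negativity of the total then yields a variable vertex $v$ with
\[
\frac{q_P(v)}2+\sum_{a\in N_P(v)\cap A}\Bigl(1-\frac{\alpha_g}{d_P(a)}\Bigr)<\alpha_g .
\]

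\textbf{Converting to the strong condition.} From this inequality $q=q_P(v)<2\alpha_g$, so $q\le D_g$. Any adjacent constrain vertex $a$ has degree $r$ with $1-\alpha_g/r<\alpha_g-q/2\le\alpha_g$, which bounds $r$. I need $r\le D_g$ in order to apply $\rho_g$; checking that this bound indeed falls in the range $2\le r\le D_g$, or that the maximum ratio still dominates, is the step I expect to be fiddly. Given this, $2^{r-1}-1\le\rho_g(1-\alpha_g/r)$ termwise. Summing gives $W_P(v)<\rho_g(\alpha_g-q/2)$, hence $1+W_P(v)\le C_g(q)$ by integrality. By the definition of $T(g)$, $\lfloor\log C_g(q)\rfloor\le T(g)-q-1$, so $C_g(q)<2^{T(g)-q}$. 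This gives both $q<T(g)$ and $W_P(v)<2^{T(g)-q}-1$.

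\textbf{Main obstacle.} The crux is the discharging normalization. It must simultaneously keep every constrain vertex nonnegative, which needs $d(a)\ge2>\alpha_g$, and produce exactly the per-constrain-vertex weight $1-\alpha_g/r$ in the variable vertex's inequality. Verifying that the girth of $P$, including cycles through constrain vertices, is at least $g$, so that Euler's bound with $\alpha_g$ applies, is the other point I would check carefully.
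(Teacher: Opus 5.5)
Your reduction to Lemma~\ref{lem:constraint-order} and your final conversion (termwise ratio bound, integrality, definition of $T(g)$) match the paper. The inequality you end with is exactly the paper's $L(v)<\alpha_g$. The discharging detour is unnecessary: directly, $\sum_{v\in X}L(v)=e_D+\sum_{a\in A}(d_P(a)-\alpha_g)=|E(P)|-\alpha_g|A|<\alpha_g n$, and nothing needs to stay nonnegative at constrain vertices. What $d_P(a)\ge2>\alpha_g$ is really needed for is positivity of each term $1-\alpha_g/d_P(a)$, so that $q<2\alpha_g$ follows and the ratios defining $\rho_g$ make sense.

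The genuine gap is the step you call fiddly. You need every constrain vertex adjacent to $v$ to have degree at most $D_g$, and you propose to get this from $1-\alpha_g/r<\alpha_g-q/2$. That cannot work, because $\alpha_g>1$ gives $1-\alpha_g/r<1<\alpha_g$ for every $r$.
\begin{itemize}
\item For $q=0$ the charge inequality places no bound on $r$, while $W_P(v)=2^{r-1}-1$ is unbounded.
\item For $g=5$, $q=2$ it allows $r=4$. There $(2^3-1)/(1-5/12)=12>\rho_5=27/4$, and a vertex with one such constrain neighbour has $L(v)=19/12<5/3$ yet $W_P(v)=7$, violating \eqref{eq:strong-reduction} with $t=5$.
\end{itemize}
Your inductive claim, quantified literally over systems arising by arbitrary successive reductions, is in fact false. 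Reduce $K_{1,m}$ first at its centre. Every remaining variable vertex then has $q=0$ and $W=2^{m-1}-1\ge 2^{T(g)}-1$ once $m\ge T(g)+1$.

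The missing idea is to carry the invariant ``every constrain vertex has degree at most $D_g$'' through the induction, as the paper does. It holds vacuously at the start. Existing constrain vertices only lose edges under a reduction. The new constrain vertex has degree $q_P(v)\le D_g$, because you only ever reduce at a vertex with $L(v)<\alpha_g$. With this invariant, $2^{r-1}-1\le\rho_g(1-\alpha_g/r)$ applies to every incident constrain vertex, and the rest of your argument goes through.
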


The proof follows the explanation above in three steps. First, Euler's
formula produces a variable vertex of local charge less than $\alpha_g$.
Second, the definition of $\rho_g$ converts this charge inequality into
\eqref{eq:strong-reduction} with $t=T(g)$. Third, the reduction is iterated
and Lemma~\ref{lem:constraint-order} gives the required strong ordering.

\begin{proof}
Consider a plane constraint system of girth at least $g$ in which every
constrain vertex has degree at most $D_g$. We show that it contains a variable vertex
satisfying \eqref{eq:strong-reduction} with $t=T(g)$.

If a variable vertex has degree at most $1$ in the whole auxiliary graph, the
claim is immediate. Indeed, $T(g)\ge D_g+1$ by \eqref{eq:T-g}. If the
vertex is isolated, then $q_P(v)=W_P(v)=0$. If its sole incident edge leads
to a constrain vertex, then $q_P(v)=0$ and
$W_P(v)\le 2^{D_g-1}-1$; if it is a direct edge, then $q_P(v)=1$ and
$W_P(v)=0$.

We may therefore assume that every component of the auxiliary graph has
minimum degree at least $2$. Fix one component and restrict $P$ to that
component; finding a reducible variable vertex there is sufficient. Write
$n=|X|$, let $m_r$ be the number of constrain vertices of degree $r$, and let
$e_D$ be the number of direct edges in this component. Since each facial
boundary contains a cycle, the usual Euler estimate for a plane graph of
girth at least $g$ gives
\[
e_D+\sum_{r=2}^{D_g}r m_r
\le \alpha_g\left(n+\sum_{r=2}^{D_g}m_r-2\right).
\]
To turn this global inequality into a statement about one variable vertex,
we distribute each direct edge equally between its endpoints. An incidence
with a constrain vertex of degree $r$ receives the remaining local
coefficient $1-\alpha_g/r$. Accordingly, for $v\in X$, define
\[
L(v)=\frac{q_P(v)}2+
\sum_{a\in N_P(v)\cap A}
\left(1-\frac{\alpha_g}{d_P(a)}\right).
\]
Summing the preceding planar inequality over the variable vertices yields
\[
\sum_{v\in X}L(v)
=e_D+\sum_{r=2}^{D_g}(r-\alpha_g)m_r
\le\alpha_g n-2\alpha_g<\alpha_g n.
\]
Hence some variable vertex $v$ satisfies $L(v)<\alpha_g$. If
$q=q_P(v)$, then $q<2\alpha_g$, and therefore $q\le D_g$. By the
definition of $\rho_g$,
\[
\begin{aligned}
W_P(v)
&=\sum_{a\in N_P(v)\cap A}
  \bigl(2^{d_P(a)-1}-1\bigr)\\
&\le\rho_g\sum_{a\in N_P(v)\cap A}
  \left(1-\frac{\alpha_g}{d_P(a)}\right)\\
&<\rho_g\left(\alpha_g-\frac q2\right).
\end{aligned}
\]
Since $W_P(v)$ is an integer, this implies
\[
1+W_P(v)\le C_g(q)<2^{T(g)-q},
\]
where the last inequality follows from \eqref{eq:T-g}. Thus $v$ satisfies
\eqref{eq:strong-reduction}.

Starting with a plane graph $G$ and no constrain vertices, the degree bound
on constrain vertices is vacuous. At every reduction the new constrain
vertex has degree $q_P(v)\le D_g$, and
the girth is preserved. The reduction can therefore be repeated. Applying
Lemma~\ref{lem:constraint-order} completes the proof.
\end{proof}

The formula in Theorem~\ref{thm:general-strong-girth} is explicit and involves
only a finite computation. Its values are as follows.

\begin{table}[htbp]
\centering
\begin{tabular}{|c|c|c|c|c|}
\hline
$g$ & $\alpha_g$ & $D_g$ & $\rho_g$ & $T(g)$ \\
\hline
$5$ & $5/3$ & $3$ & $27/4$ & $5$ \\
\hline
$6$ & $3/2$ & $2$ & $4$ & $4$ \\
\hline
$7$ & $7/5$ & $2$ & $10/3$ & $4$ \\
\hline
$g\ge8$ & $\le4/3$ & $2$ & $\le3$ & $3$ \\
\hline
\end{tabular}
\caption{The strong-degeneracy computation for planar girth $g\ge5$.}
\label{tab:strong-computation}
\end{table}

For clarity, when $g=5$ the four values $C_5(q)$, for $q=0,1,2,3$, are
$12,8,5,2$. When $g=6$ they are $6,4,2$; when $g=7$ they are $5,3,2$;
and at $g=8$ they are $4,3,1$. Substitution in \eqref{eq:T-g} gives the
last column of Table~\ref{tab:strong-computation}. For $g\ge6$, we have
$D_g=2$ and
\[
\rho_g\left(\alpha_g-\frac q2\right)
=\frac{2\alpha_g-q}{2-\alpha_g},
\]
which is increasing in $\alpha_g$. Since $\alpha_g$ decreases with $g$,
each relevant quantity for $g\ge8$ is maximized at $g=8$. Hence $T(g)\le3$;
the reverse inequality $T(g)\ge D_g+1=3$ follows from \eqref{eq:T-g}, so
$T(g)=3$ throughout this range.

The general computation can now be converted directly into the advertised
inversion-diameter bounds by Theorem~\ref{thm:strong-diameter}.

\begin{proof}[Proof of Theorem~\ref{thm:girth5-intro}]
The computation in Table~\ref{tab:strong-computation} gives $T(5)=5$.
Theorems~\ref{thm:general-strong-girth} and
\ref{thm:strong-diameter} now give $\diam(I(G))\le5$.
\end{proof}

\begin{proof}[Proof of Theorem~\ref{thm:girth6-intro}]
The computation in Table~\ref{tab:strong-computation} shows that $T(6)=4$.
Thus $G$ is strongly $4$-degenerate by
Theorem~\ref{thm:general-strong-girth}, and
Theorem~\ref{thm:strong-diameter} gives $\diam(I(G))\le4$.
\end{proof}

The same argument gives inversion diameter at most $3$ for planar graphs of
girth at least $8$. At girth $7$, the uniform strong-degeneracy computation
gives $4$, while the sharper bound $3$ is known from Arana et
al.~\cite{ABBC+}.

\subsection{The limiting case: girth four}

We now explain why the computation in
Theorem~\ref{thm:general-strong-girth} does not extend to $g=4$. The uniform
proof uses $\alpha_g=g/(g-2)$ and converts the local Euler
charge of a constrain vertex of degree $r$ into its contribution to
$W_P(v)$ through the ratio
\[
\frac{2^{r-1}-1}{1-\alpha_g/r}.
\]
For $g\ge5$, we have $\alpha_g<2$, so the denominator is positive for every
$r\ge2$. At girth $4$, however, $\alpha_4=2$. A binary constrain vertex
therefore satisfies
\[
1-\frac{\alpha_4}{2}=0,
\qquad\text{whereas}\qquad
2^{2-1}-1=1.
\]
Thus a binary constrain vertex contributes nothing to the Euler charge but
still contributes one forbidden nonempty subset to $W_P(v)$. Equivalently,
the definition of $\rho_g$ contains a zero denominator when $g=4$, so no
finite value of $\rho_4$ can make the conversion used in the general proof.
This is the precise point where that computation fails.

To repair the failure, we need an independent estimate for binary constrain
vertices. Canonicality allows us to suppress them, apply a second planar
estimate, and combine it with the triangle-free Euler estimate. The
coefficients are chosen to dominate every possible contribution to
$W_P(v)$. The following lemma supplies the variable vertex at which the
reduction can continue.

\begin{lemma}\label{lem:girth4-reducible}
Let $P$ be a nonempty canonical triangle-free plane constraint system in
which every constrain vertex has degree at most $5$. Then some variable vertex $v$
satisfies
\[
q_P(v)\le4
\qquad\text{and}\qquad
W_P(v)<2^{7-q_P(v)}-1.
\]
\end{lemma}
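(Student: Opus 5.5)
The plan is a discharging argument that combines two Euler estimates. The first is the triangle-free one already used in Theorem~\ref{thm:general-strong-girth}. The second is taken after suppressing binary constrain vertices, and it recovers the charge those vertices lose when $\alpha_4=2$. As in the proof of Theorem~\ref{thm:general-strong-girth}, I first dispose of variable vertices of degree at most $1$ in $P$: they have $q_P(v)\le1$ and $W_P(v)\le 2^{4}-1=15<2^{6}-1$. I may then restrict to one component of minimum degree at least $2$. Write $n=|X|$, $e_D$ for the number of direct edges and $m_r$ ($2\le r\le5$) for the number of constrain vertices of degree $r$.

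\emph{Two global inequalities.} Since $P$ is triangle-free and plane, Euler gives
\[
e_D+\sum_{r=2}^{5}(r-2)m_r\le 2n-4 .
\]
For the second, replace every binary constrain vertex $a$ with $N_P(a)=\{x,y\}$ by an edge $xy$, obtaining a plane graph $H$ on $X\cup\{a\in A:d_P(a)\ge3\}$. I claim $H$ is simple. A loop cannot arise. A binary vertex $a$ together with a direct edge $xy$ would form the triangle $xay$ in $P$. Two binary vertices with the same pair are excluded by canonicality. Hence $e(H)\le 3|V(H)|-6$, that is,
\[
e_D+m_2+\sum_{r=3}^{5}r\,m_r\le 3\Bigl(n+\sum_{r=3}^5 m_r\Bigr)-6 .
\]
For weights $\lambda,\mu\ge0$, add $\lambda$ times the first inequality and $\mu$ times the second. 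Then distribute the left side over the variable vertices. Each direct-edge incidence gets $(\lambda+\mu)/2$. An incidence with a binary constrain vertex gets $\mu/2$. An incidence with a constrain vertex of degree $r\ge3$ gets
\[
c_r=\lambda\Bigl(1-\tfrac2r\Bigr)+\mu\Bigl(1-\tfrac3r\Bigr).
\]
The total is strictly less than $(2\lambda+3\mu)n$, so some $v\in X$ has local charge $L(v)<2\lambda+3\mu$.

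\emph{Local verification.} From $(\lambda+\mu)q/2\le L(v)$ I get $q<2(2\lambda+3\mu)/(\lambda+\mu)$. This forces $q\le4$ provided $\mu\le\lambda$, and that settles the first inequality of the lemma. For the second, I need the following for each $q\in\{0,\dots,4\}$. Consider every multiset of incident constrain degrees whose charges sum to less than $2\lambda+3\mu-(\lambda+\mu)q/2$. Its weight $W=\#\{\text{binary}\}+3m'_3+7m'_4+15m'_5$ must be below $2^{7-q}-1$, where $m'_r$ counts the incident constrain vertices of degree $r$. As in the definition of $\rho_g$, it suffices to compare ratios: with budget $B_q$, the bound is $W<\max\{2/\mu,\,3/c_3,\,7/c_4,\,15/c_5\}\cdot B_q$. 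Integrality then sharpens this, as in the computation of $C_g(q)$.

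\emph{Main obstacle.} The hard part is choosing $(\lambda,\mu)$. The case $q=4$ has the smallest allowance, $W\le6$, and it pulls against $q=0$, whose budget is largest. My first trial, $\lambda=\mu=1$, fails: at $q=4$ the residual budget is $<1$, and one incident constrain vertex of degree $4$ (charge $3/4$, weight $7$) fits. So I would first try decreasing $\mu$ slightly below $\lambda$. This raises $c_4/c_3$ relative to the budget $2\lambda+3\mu-2(\lambda+\mu)=\mu$ at $q=4$, and I would aim for $c_4\ge\mu$ while keeping $3/c_3$ and $2/\mu$ controlled for $q\le3$. If no single pair works for all $q$, I would switch to a case-dependent finite check: enumerate the at most a few dozen degree-multisets allowed at each $q$ and verify $W<2^{7-q}-1$ directly. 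The degree bound $d_P(a)\le5$ keeps that enumeration finite.
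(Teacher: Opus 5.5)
Your setup is exactly the paper's. You use the same triangle-free Euler inequality and the same second inequality after suppressing binary constrain vertices, justified in the same way by canonicality and triangle-freeness. You also take the same weighted combination and distribute it over variable vertices in the same way; after normalizing $\lambda+\mu=1$, your $c_r$ is the paper's $1-27/(13r)$.

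The gap is that you stop where the content of the lemma lies. You never fix a pair $(\lambda,\mu)$ and verify the local inequality for all $q\in\{0,\dots,4\}$. Your fallback does not help: the weights must be fixed before the averaging produces $v$. Enumeration can therefore only sharpen the check for one pair; it cannot use different pairs for different $q$.

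The direction you propose would also fail. Write $s=\mu$, so the budget is $B_q=2+s-q/2$. The binding case is $q=3$, where you need $W_P(v)\le 14$, and two configurations must not fit under $B_3=1/2+s$:
\begin{itemize}
\item fifteen incident binary constrain vertices (charge $15s/2$, weight $15$), which forces $s\ge 1/13$;
\item a single incident constrain vertex of degree $5$ (charge $(3-s)/5$, weight $15$), which forces $s\le 1/12$.
\end{itemize}
So $\mu/\lambda$ must lie in $[1/12,1/11]$, far from ``slightly below $\lambda$''. Already $q=4$ with one degree-$4$ constrain vertex requires $s\le 2/5$.

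The missing step is the paper's choice $\lambda=12/13$, $\mu=1/13$. For constrain degrees $2,3,4,5$, each incidence coefficient is then at least $1/26$ times that vertex's contribution to $W_P(v)$: $1/26$, $4/13\ge 3/26$, $25/52\ge 7/26$, and $38/65>15/26$. Take the vertex $v$ produced by averaging, with $L(v)<27/13$ and direct degree $q$. It satisfies $W_P(v)\le 26\,(L(v)-q/2)<54-13q$, so $W_P(v)\le 53-13q$. For $q=0,\dots,4$ this is at most $2^{7-q}-2$, with equality at $q=3$, which gives the required $W_P(v)<2^{7-q}-1$. With this choice inserted, your argument is complete.
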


We briefly describe the calculation before giving the proof. The first
planar inequality uses triangle-freeness and controls direct edges together
with constrain vertices of degrees $3$, $4$, and $5$. The second inequality
is obtained after suppressing binary constrain vertices. A weighted average
of these inequalities gives a negative total charge. If no variable vertex
satisfied the conclusion, however, the contribution of every variable vertex
would be nonnegative.

\begin{proof}
Write $n=|X|$ and let $m_r$ be the number of constrain vertices of degree
$r$. We use $M$ for the number of nonbinary constrain vertices and $e_I$
for the number of incidences involving them; thus
\[
M=m_3+m_4+m_5,\qquad
e_I=3m_3+4m_4+5m_5.
\]
Let $e_D$ be the number of direct edges and set $N=n+M$. Assume first that
$N\ge3$. Since $P$ is triangle-free and planar,
\[
e_D+2m_2+e_I\le2(n+m_2+M)-4,
\]
and hence
\begin{equation}\label{eq:g4-first-planar}
e_D+e_I\le2N-4.
\end{equation}

Suppress every binary constrain vertex. The resulting plane graph is simple:
canonicality excludes two binary constrain vertices on the same pair, while a
direct edge on that pair would form a triangle in $P$. Therefore
\begin{equation}\label{eq:g4-second-planar}
e_D+m_2+e_I\le3N-6.
\end{equation}

Taking $12/13$ of \eqref{eq:g4-first-planar} and $1/13$ of
\eqref{eq:g4-second-planar}, we obtain
\begin{equation}\label{eq:g4-combined-planar}
e_D+e_I+\frac{m_2}{13}
\le \frac{27}{13}N-\frac{54}{13}.
\end{equation}

We now distribute \eqref{eq:g4-combined-planar} among the variable vertices.
Each endpoint receives one half of a direct edge. Each incidence with a
binary constrain vertex receives coefficient $1/26$, and an incidence with
a constrain vertex of degree $r\ge3$ receives coefficient
$1-27/(13r)$. Finally, we subtract $27/13$ at every variable vertex. This
choice makes the sum of the local charges equal to the left side of
\eqref{eq:g4-combined-planar} minus $(27/13)N$.

For a variable vertex $v$, let $h_r(v)$ be the number of incident constrain
vertices of degree $r$, and define
\[
\gamma(v)=\frac{q_P(v)}2+\frac{h_2(v)}{26}
+\sum_{r=3}^5\left(1-\frac{27}{13r}\right)h_r(v)
-\frac{27}{13}.
\]
Summing over $X$ and using \eqref{eq:g4-combined-planar} gives
\begin{equation}\label{eq:g4-negative-charge}
\sum_{v\in X}\gamma(v)\le-\frac{54}{13}.
\end{equation}

Suppose that the conclusion fails at every variable vertex. If
$q_P(v)\ge5$, then $q_P(v)/2>27/13$, so $\gamma(v)\ge0$. Now suppose that
$q_P(v)\le4$. A constrain vertex of degree $2,3,4$, or $5$ contributes, respectively,
$1,3,7$, or $15$ to $W_P(v)$. Its positive coefficient in $\gamma(v)$ is at
least $1/26$ times that contribution, since
\[
\frac1{26}=\frac1{26},\qquad
\frac4{13}\ge\frac3{26},\qquad
\frac{25}{52}\ge\frac7{26},\qquad
\frac{38}{65}>\frac{15}{26}.
\]
The failure of the conclusion gives
$W_P(v)\ge2^{7-q_P(v)}-1$, and consequently
\[
\gamma(v)\ge
\frac{q_P(v)}2+\frac{2^{7-q_P(v)}-1}{26}-\frac{27}{13}.
\]
For $q_P(v)=0,1,2,3,4$, the first two terms on the right have numerators
\[
127,\quad76,\quad57,\quad54,\quad59
\]
over the common denominator $26$. Each is at least
$54/26=27/13$, and hence $\gamma(v)\ge0$. This contradicts
\eqref{eq:g4-negative-charge}.

It remains to consider $N<3$, which is trivial. The conclusion follows immediately.
\end{proof}

\begin{theorem}\label{thm:strong-girth4}
Every finite simple planar graph of girth at least $4$ is strongly
$7$-degenerate.
\end{theorem}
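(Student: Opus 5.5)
We will combine Lemma~\ref{lem:girth4-reducible} with Lemma~\ref{lem:constraint-order}, taking $t=7$, in the same way that Theorem~\ref{thm:general-strong-girth} was derived from its charge computation. Let $G$ be a finite simple planar graph of girth at least $4$, and fix a plane embedding. The starting constraint system has variable set $V(G)$, direct-edge set $E(G)$ and no constrain vertices. It is canonical for trivial reasons, it is triangle-free, and the requirement that constrain vertices have degree at most $5$ holds vacuously. We then reduce repeatedly. At each step Lemma~\ref{lem:girth4-reducible} supplies a variable vertex $v$ with $q_P(v)\le4<7$ and $W_P(v)<2^{7-q_P(v)}-1$, which is exactly \eqref{eq:strong-reduction} for $t=7$. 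Once no variable vertex remains, Lemma~\ref{lem:constraint-order} shows that $G$ is strongly $7$-degenerate.

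The substance of the proof is checking that the hypotheses of Lemma~\ref{lem:girth4-reducible} are preserved by every reduction. We will verify them one at a time.
\begin{enumerate}
\item \emph{Planarity.} The new constrain vertex is placed at the old position of $v$, as explained after Definition~\ref{def:plane-constraint-system}.
\item \emph{Triangle-freeness.} A constrain vertex adjacent to $N(v)\cap X$ is a relabelling of $v$ whose edges to constrain neighbours have been removed. Deleting vertices and edges creates no cycles, so no triangle appears. In particular, no two direct neighbours of $v$ are adjacent, since otherwise they would have formed a triangle with $v$.
\item \emph{Canonicality.} Duplicate neighbourhoods are merged by construction, as in the definition of the reduction.
\item \emph{Degree bound.} Surviving constrain vertices only lose degree. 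The new one has degree $q_P(v)\le4\le5$.
\end{enumerate}
The induction therefore continues for as long as some variable vertex remains.

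Two degenerate situations need attention. First, Lemma~\ref{lem:girth4-reducible} requires a nonempty system, and we only invoke it while $X\neq\emptyset$. If $X$ is empty but constrain vertices remain, the process simply stops. Second, the proof of the lemma dismisses $N<3$ as trivial. When we cite it we should confirm that case directly. If $N\le2$, then at most two vertices are variable or nonbinary constrain vertices. Every binary constrain vertex is joined to two variable vertices. By canonicality at most one binary constrain vertex exists per pair of variable vertices, so some variable vertex $v$ has $q_P(v)\le1$ and $W_P(v)\le 15+1$. This is below $2^{6}-1$, so the conclusion holds.

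The step most likely to be an obstacle is the reading of Lemma~\ref{lem:constraint-order} when a variable vertex has no later neighbour. There we need $|N^-(v)|\le 7$, which is implied by $q_P(v)\le4$. When $v$ has later neighbours, $|\mathcal S_\prec(v)|\le 1+W_P(v)<2^{7-q_P(v)}$, so the sum $q_P(v)+\log|\mathcal S_\prec(v)|$ is strictly less than $7$, as required. With these checks in place, the theorem follows, and Theorem~\ref{thm:girth4-intro} then follows from Theorem~\ref{thm:strong-diameter}.
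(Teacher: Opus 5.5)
Your proposal is correct and follows the paper's proof. Start from $G$ with no constrain vertices, repeatedly apply Lemma~\ref{lem:girth4-reducible}, note that each reduction preserves planarity, triangle-freeness and canonicality and only creates constrain vertices of degree $q_P(v)\le4$, and conclude with Lemma~\ref{lem:constraint-order} at $t=7$. Your extra checks of the $N<3$ case and of the no-later-neighbour condition are sound, and the former is even easier than you make it: with at most two variable vertices no constrain vertex of degree at least $3$ can exist, so in fact $W_P(v)\le1$.
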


Lemma~\ref{lem:girth4-reducible} is designed so that its two inequalities are
exactly \eqref{eq:strong-reduction} with $t=7$. We may therefore iterate the
reduction and use Lemma~\ref{lem:constraint-order}.

\begin{proof}
Fix a plane embedding of $G$ and start with $G$ as a constraint system with
no constrain vertices. Lemma~\ref{lem:girth4-reducible} supplies a vertex satisfying
\eqref{eq:strong-reduction} with $t=7$. Its direct degree is at most $4$, so
the new constrain vertex created by the reduction has degree at most $4$. The reduced
system is again canonical and triangle-free, and the degrees of all constrain
vertices are at most $4$. We may therefore continue until no variable
vertex remains. The result now
follows from Lemma~\ref{lem:constraint-order}.
\end{proof}

The girth-$4$ argument is now complete. As in the general case, no additional
vector extension is needed: we apply the theorem of Havet et
al.~\cite{HHR24} directly to the strong ordering just constructed.

\begin{proof}[Proof of Theorem~\ref{thm:girth4-intro}]
By Theorem~\ref{thm:strong-girth4}, the graph $G$ is strongly
$7$-degenerate. Theorem~\ref{thm:strong-diameter} gives
$\diam(I(G))\le7$.
\end{proof}

\section*{Declarations}
The authors acknowledge the use of OpenAI’s ChatGPT during the preparation of the manuscript. It was used to improve the language, organization, and presentation of the manuscript and to
assist in revising preliminary drafts. In particular, the idea in Theorem \ref{thm:main} arose from an interaction with ChatGPT. After using this tool the authors reviewed and edited the content as needed and take full responsibility for the content of the publication. No mathematical statement, proof, constant, numerical value or reference in this paper was generated without author
verification. The authors take full responsibility for this paper.


\end{document}